\newtheorem{theorem}{Theorem}
\newtheorem{corollary}[theorem]{Corollary}
\newtheorem{definition}[theorem]{Definition}
\newtheorem{example}[theorem]{Example}
\newtheorem{proposition}[theorem]{Proposition}
\newtheorem{remark}[theorem]{Remark}
\title{Thermodynamic Formalism for Haar systems in Noncommutative Integration: transverse functions and entropy of  transverse measures}
\author{Artur O. Lopes  and Jairo K. Mengue}
\begin{document}

\maketitle

\begin{abstract}

We consider here  a class of groupoids obtained via an equivalence relation (the subgroupoids of pair groupoids).   We generalize to Haar Systems in these groupoids some results related to entropy and pressure which are well known in Thermodynamic Formalism.
 We introduce a transfer operator, where the equivalence relation (which defines the groupoid) plays the role of the dynamics and  the corresponding transverse function plays the role of the {\it a priori} probability.
We also introduce the concept of invariant transverse probability  and  of entropy for an invariant transverse probability, as well as of pressure for transverse functions.
Moreover, we explore the relation between quasi-invariant probabilities and transverse measures. 
Our results are on measurable category. 
\end{abstract}

\section{Introduction}

Our purpose here is to extend the concepts of invariant probability,  entropy and pressure from Thermodynamic Formalism to the setting of quasi-invariant probabilities, transverse functions and transverse measures, which are naturally defined on groupoids and Haar systems.  The groupoids we consider here  will always be obtained via an equivalence relation (also called subgroupoids of the pair groupoid - see section 3 in \cite{wei} for definitions). Most of our results
are on the measurable category.

The results we obtain can be seen as similar to the classical results of Thermodynamic Formalism. We refer the reader to \cite{PP} and \cite{Walters} for results on Thermodynamic Formalism and to \cite{Con}, \cite{K}, \cite{Rena1} for results on Haar systems, groupoids and transverse measures (see also \cite{CLM} and \cite{BK}  for a strictly   measure theoretical perspective). But, in any case we point out that the present work is {\bf self contained} for readers familiar with Thermodynamic Formalism.

The classical Kolmogorov-Sinai entropy is defined for probabilities which are invariant for a deterministic dynamical system. We point out that for a Haar system on a groupoid there is  (in general) no underlying dynamical system. To realize that entropy depends on the {\it a priori} probability (as described in [24]) is the key issue for finding a  suitable procedure  to extend this formalism (of thermodynamic formalism for H\"{o}lder potentials) to Haar systems. When the alphabet is not countable (the so called generalized $X Y$ models) the definition of entropy via dynamical partitions is not suitable anymore and an {\it a priori} probability is necessary.

In the dictionary to be used here the transverse function of a Haar system is the mathematical object corresponding to the {\it a priori} probability and the equivalence relation (the groupoid) plays the role of the dynamics. The role of the potential is played by the  modular function and, finally, the transverse measures and quasi-invariant probabilities in Haar systems play the role of the measures in thermodynamic formalism.

%The paper \cite{ACR} complement \cite{LMMS} showing unicity of the equilibrium probability and the paper \cite{CME} shows that the equilibrium state has full support (all this for Lipschitz potentials). THIS WAS RELOCATED FOR SECTION 3. 

%The concept of transverse measure - acting on measurable transverse functions - is a natural generalization of the classical concept of measure - acting on  measurable functions -  (see remark \ref{re1} after Theorem \ref{density} and also example \ref{so}).

In section \ref{int} we introduce the main notations and definitions concerning Haar Systems, which includes the concepts of transverse function, modular function, transverse measure and quasi-invariant probability. 

Theorem 54 in \cite{CLM} shows that DLR probabilities (see \cite{CL2} for definition) are quasi-invariant probabilities for a certain class of  H\"{o}lder modular functions in the case the alphabet is finite. In section 3 here we show an analogous results for the case the alphabet is a compact metric space. We consider as an example the generalized XY model, as studied in \cite{LMMS}, and we show that any eigenprobability for the dual Ruelle operator is a quasi-invariant probability for the associated Haar System.  
We assume that the modular function is just of H\"{o}lder class on this section.

The results on the next sections will be on the measurable category.

In section \ref{nonc} we consider particular modular functions and develop the main part of the paper studying Haar Systems from a Thermodynamic Formalism point of view. We introduce the concepts of Haar-invariant probabilities and Haar-invariant transverse probabilities and of entropy for Haar-invariant transverse probabilities and pressure for transverse functions. In  \cite{Con} it is presented the relation of transverse measures and quasi-invariant probabilities (see also section 5 in \cite{CLM}). In section \ref{trans} we prove an equivalence between the Haar-invariant probabilities and Haar-invariant transverse probabilities.

 In  section \ref{examples} we exhibit examples and analyze the relations between the concepts introduced in this work with the classical ones for Thermodynamic Formalism.
In Section \ref{haar-dy}, which considers a large class of dynamically defined groupoids
and quasi-invariant probabilities, the Rokhlin's disintegration theorem plays
an important role.

 We refer to \cite{Weiss}, \cite{Nad} and \cite{Schmi} for classical results on measurable dynamics. \cite{K}, \cite{Fel1} and \cite{Fel2} are the classical references for Haar systems when the transverse function is the counting measure.
For the relation of quasi-invariant probabilities with KMS states of $C^*$-algebras (and von Neumann algebras) see \cite{Rena1}, \cite{Rena2}, \cite{GL}, \cite{Cas1}, \cite{HR}, \cite{Rue}, \cite{KR}, \cite{Ana1}, \cite{Ana2}, \cite{Sakai} \cite{LM} and \cite{CLM}.
A different kind of relation between KMS states of $C^*$-algebras  and Thermodynamic Formalism is described in  \cite{Exel1}, \cite{EL1} and \cite{EL2}.
 We refer the reader to \cite{CLM} for an extensive presentation  of Haar systems and non commutative integration on groupoids obtained via an equivalence relation (some results are for dynamically defined equivalence relations).

\section{Transverse functions and transverse measures} \label{int}

Consider a metric space $\Omega$ with metric $d$
and denote by $\mathcal{B}$ the Borel sigma-algebra on $\Omega$. We fix an equivalence relation $R$ on $\Omega$ and if two points $x,y$ are related, we write
$x\sim y$. We denote by $G\subset \Omega \times \Omega$ the associated {\bf  Groupoid}
$$ G =\{(x,y)\in\Omega \times \Omega \,|\, x\sim y\}$$
and by $[x]=\{y\in \Omega \,|\,x\sim y\}$ the class of $x$.

This corresponds to subgroupoids of the pair groupoid (see section 3 in \cite{wei}).  These are the only kind of groupoids we will consider here.

 Extreme examples of such groupoids are the cartesian product (pair groupoid)  $G=\Omega\times \Omega$ when $[x]=\Omega\,,\, \forall \,\,x$ (where any two points are related) and the diagonal $G=\{(x,x)\,|\,x\in \Omega\}$ when $ [x]=\{x\}\,,\,\forall\,\,x$ (where each point is related just with itself).

We consider over $G$ the topology induced by the  product topology on $\Omega \times \Omega$ and denote also by
$\mathcal{B}$  the Borel sigma-algebra induced  on $G$.

\begin{definition} We say that $G$ is a {\bf measurable groupoid} if the maps
$$s(x,y)=x, \,\,\,r(x,y)=y,\,\,\, h(x,y)=(y,x)\,\,\,\text{and}\,\,\,Z(\,(x,s),(s,y)\,)= (x,y),$$
are  Borel measurable.
\end{definition}

If $G$ is a {measurable groupoid}, then, in particular, each class $[x],\, x\in \Omega$ is a measurable set of $\Omega$. In all this work we suppose that $G$ is a measurable groupoid obtained from a general equivalence relation.

\begin{remark} \label{alg}  On the general definition of groupoids (see  page 100 of  \cite{K}) appears the  concept of a set of morphisms $\gamma: x \to y$, for each pair of objects $x$ and $y$. In our work the objects are the points of $\Omega$ and given two points $a\sim b$ in $\Omega$ there exist a unique morphism  $\gamma:a\to b$ which is represented by $(a,b)$. It follows that $s(\gamma)$ and $r(\gamma)$ in \cite{K} just correspond to the projections $s(x,y)= x$ and $r(x,y)=y$. The morphisms are not explicitly used in our work. 
\end{remark}

\begin{definition}\label{kernel}
A {\bf kernel}   $\lambda$ on the measurable groupoid $G$ is a map  of $\Omega$ in the space of measures on  $(\Omega,\mathcal{B})$, such
that,

1)  $\forall y \in \Omega$, the measure $\lambda^y$ has support on $[y]$,

2) $\forall A \in \mathcal{B}$, we have that $\lambda^y (A)$, as a function of $y$, is measurable.

\end{definition}

In some sense, the two above items corresponds to items i) and ii) in definition 5.14 (disintegration of a measure with respect to a partition) in \cite{OV}. See also Theorem \ref{Rokhlin} below.

There is a subtle point on item 1) in  the definition of kernel. An alternative definition could be:\,
1) for any $y\in \Omega$ we have that $\lambda^y (\Omega -[y])=0.$ Some of the results we get here could be obtained with this alternative condition (but we will not elaborate on that).

\begin{definition} \label{krt}
A {\bf transverse function } $\nu$ on the measurable groupoid $G$ is a kernel satisfying
$\nu^x=\nu^y$, for any $(x, y) \in G$ (that is $x\sim y$).
We denote by $\mathcal{E}^+$ {\bf the set of transverse functions}.
\end{definition}

The concept of transverse function is a natural generalization of the concept of measurable non negative function $f: \Omega \to \mathbb{R}$ (see remark \ref{re1} after Theorem \ref{density}).

We denote by $\mathcal{E}$ the set of signed transverse functions. More precisely, $\nu \in \mathcal{E}$, if the family of measures $(\nu^{y})^+$ and $(\nu^{y})^-$ that form, for each $y$, the Hann-Jordan decomposition of $\nu^y$ are both transverse functions. An important example of signed transverse function is $\mu^{y}(dx):=f(x)\nu^{y}(dx)$ where $f:\Omega\to \mathbb{R}$ is measurable and bounded and $\nu \in \mathcal{E}^+$ satisfies $\int 1\,\nu^y(dx)=1\,,\,\forall y$.

\begin{definition} \label{Haa} The pair $(G,\hat{\nu})$, where $G$ is a measurable groupoid and $\hat{\nu}$ a transverse function will be called a {\bf Haar system}.
\end{definition}

\begin{example} \label{Bak}
Take $\Omega=[0,1] \times[0,1]$ and consider the groupoid $G$ defined from the equivalence relation
	$x=(a_1,a_2) \sim y =(b_1,b_2)$, if $a_1=b_1$. The classes can be identified as vertical lines of the unitary square. They are the local unstable leaves of a Baker map (see \cite{CLM} for a complete discussion).

	 Given a probability $\nu$ on $[0,1]$ and a measurable function $\varphi:\Omega \to [0,+\infty)$, we can interpret $\varphi$ as a family of density functions $\varphi^{a_1}:[\, a_1,\,\cdot \,]\to \mathbb{R}$, each one acting in a vertical fiber, and define a transverse function $\hat{\nu}$ which coincides with $\varphi^{a_1}d\nu$ in the fiber $[\, a_1,\,\cdot \,]$. Then $(G,\hat{\nu})$ is a Haar system.

\end{example}

A kernel $\lambda$ is characterized by the operator
\[\lambda(f)(y) = \int f(x,y)\,\lambda^{y}(dx), \]
acting over $\lambda$-integrable functions $f:G\to \mathbb{R}$.
Given a kernel $\lambda$ and a $\lambda-$integrable function $g\geq 0$  we denote by $g\lambda$ the kernel $(g\lambda)^{y}(dx)=g(x,y)\lambda^{y}(dx)$. In this way
\[(g\lambda)(f)(y)  = \int f(x,y)g(x,y)\lambda^y(dx).\]

The convolution of two kernels $\lambda_1$ and $\lambda_2$ is the kernel $\lambda_1*\lambda_2$ satisfying
\begin{equation}\label{convolution}
(\lambda_1*\lambda_2)(f)(y) = \int f(x,y)\,(\lambda_1*\lambda_2)^{y}(dx) = \iint f(s,y)\lambda_2^{x}(ds)\lambda_1^{y}(dx),
\end{equation}
for any $\lambda_2^{x}(ds)\lambda_1^{y}(dx)$-integrable  function $f$.

\begin{definition} \label{Har} A {\bf modular function} over the groupoid $G$ is a measurable function $\delta:G \to \mathbb{R}$,
such that, for any $x\in \Omega$, and any pair $y,z \in [x]$ we have that $\delta(x,y)\, \delta (y,z)= \delta(x,z).$

\end{definition}

\begin{definition} \label{coco}
  A {\bf transverse measure} $\Lambda$ for the groupoid $G$ and the modular function  $\delta$ is a linear\footnote{which means, $\Lambda(a\nu+b\mu)=a\Lambda(\nu)+b\Lambda(\mu)$ for any $\mu,\nu \in \mathcal{E}^+$ and $a,b \in \mathbb{R}$ such that $(a \nu+b\mu) \in \mathcal{E}^{+}$} function  $\Lambda :\mathcal{E}^+\to \mathbb{R}^+$, which satisfies the property:  for each kernel $\lambda$ such that  for any $y$,  we have $\lambda^y(1) =1$, then, if $\nu_1$ and $\nu_2$ are transverse functions satisfying $\nu_1 \,*\, (\delta \lambda)=\nu_2$,  it will be required  that
\begin{equation}
\label{gogo}  \Lambda (\nu_1)= \Lambda (\nu_2).
\end{equation}

\end{definition}

\medskip

The action of a transverse measure $\Lambda$  on $\mathcal{E}^+$ can be linearly extended to $\mathcal{E}$ (separating in the negative and positive part).

\medskip

As we will see later the concept of transverse measure (acting on transverse functions) is a natural generalization of the classical concept of measure (acting by integration on functions) for the setting of Haar Systems (see remark \ref{re1} after Theorem \ref{density} and also example \ref{so}).

\medskip

\begin{definition} \label{xix}
Given a modular function $\delta$, a grupoid $G$ and a fixed transverse function $\hat{\nu}$, which is a probability for any $y$, we say that a probability $M$ on $\Omega$ is {\bf quasi-invariant} for the Haar system $(G,\hat{\nu})$ if for any bounded measurable function $f:G\to\mathbb{R}$,
$$\iint f(y,x)\, \hat{\nu}^y (dx) d M( y) =\iint f(x,y)  \delta(x,y)^{-1} \hat{\nu}^y (dx) d M(y).$$

\end{definition}

\medskip

In the Theorem \ref{equivalent} we exhibit, under certain hypotheses, a relation between transverse measures and quasi-invariant probabilities for modular functions in the particular form $\delta(x,y)=e^{V(y)-V(x)}$.

There are different (analogous) definitions of quasi-invariant probability. For example in \cite{Mi1} and \cite{Mi2} there is no mention to transverse functions and the concept is defined via Borel injections (it is considered the concept of $\delta$-invariant probability). For the existence of quasi-invariant probabilities in the measurable dynamics setting see
appendix of \cite{Rena3} or \cite{Kri1}, \cite{Kri2}, \cite{Schmi} and \cite{Nad}.

An interesting class of groupoids are described by Definition 1.9 and 1.10 in \cite{LO}.  The authors called continuous (or, Lipschitz)   groupoid, a groupoid defined by an equivalence relation on the symbolic space $X=\{1,2,...,d\}^{\mathbb{N}}$, where given two close elements $x,y \in X$, there is a continuous (or, Lipschitz) correspondence such that one  can associate elements on each of the finite classes $[x]$ and $[y]$ (which have same cardinality). In this case a kind of Ruelle operator
(the Haar-Ruelle operator) can be defined and stronger properties  (compare to the measurable setting we consider here) can be obtained.

Here the transverse function $\hat{\nu}$ (defining a Haar system) plays an important role. Note, however, that in  the definition of transverse measure it is not mentioned a fixed Haar system.

Given an equivalence relation defining a groupoid $G$, suppose that $[x]$ is a finite set for any $x\in \Omega$. The saturation of a measurable set $B\subset \Omega $ is the subset of $G$ given by
$$ S [B] = \cup_{x\in B}  [x].$$
Consider the Haar system where the transverse function is the counting measure. In section 4.2 of \cite{BK} (or, in \cite{Fel1}) it is shown (a classical result) that a probability $M$ is quasi-invariant for some modular measurable function  $\delta$, if and only if, satisfies the condition
\begin{equation} \label{leo} M(B)=0 \,\,\text{implies that}\, M(S[B])=0, \,\,\text{for all Borel sets}\,B \in \Omega.\end{equation}

\begin{example}

Consider the example of Haar system where $\Omega=[0,1] \times[0,1]$, each class is a vertical line and $\hat{\nu}^{a}$ is the Lebesgue measure on each line. The classes are the local unstable leaves of a nonlinear Baker map  (see \cite{CLM} for a precise definition) $F:\Omega \to \Omega$ given by
$$F(a_1,a_2) =(H (a_1,a_2), T (a_2)),$$
where $T:[0,1]\to [0,1]$ is a $C^{1 + \alpha}$ expanding transformation
($F$ is a simplified version of an Anosov transformation). There is an interesting relation of the SBR probability for $F$
and the quasi-invariant probability $M$ (see \cite{CLM})  for the modular function $\delta$ given by

$$\delta(\,(a_1,y_1)\,,\, (a_1,y_2)\,)=\frac{V(a_1,y_1)}{V(a_1,y_2)}= \Pi_{n=1}^\infty\,\, \,\frac{T ' ( b^n (a_1,y_1))}{  T ' ( b^n (a_1,y_2))},$$
where for each $n\geq 0$, the points $b^n (a_1,y_1)$ and  $b^n (a_1,y_2))$ are, respectively, the successive
$n$-preimages,  of $(a_1,y_1)$ and  $(a_1,y_2)$ which are close by in the same
vertical line (local unstable).

\end{example}

\section{The inspiring model} \label{insp}

The purpose of this section is to present a preliminary example which can help the reader to understand why is natural the reasoning we will pursue on the following sections.

On this section is fixed a compact metric space $K$ and the associated Bernoulli space\footnote{It describes
	the Statistical Mechanics system  where  the fiber of spins is the metric space $K$ (that can be finite or not) and each site of the lattice is on  $\mathbb{N}$.} \,\,$\Omega = K^\mathbb{N}$.
Points $x\in \Omega$ are denoted by $x=(x_1,x_2,...,x_n,...).$ This is called the generalized XY model studied in \cite{LMMS} (see also \cite{ACR} and \cite{CME}).

The groupoid $G \subset \Omega \times \Omega$ is defined from the equivalence relation $x=(x_1,x_2,x_3,...) \sim y=(y_1,y_2,y_3,...)$, if $x_j=y_j$, for all $j\geq 2.$ Observe that $x\sim y$ means that $x=(a,y_2,y_3,...)$, for some $a\in K$, which is equivalent to $\sigma(x)=\sigma(y)$, where $\sigma$ is the shift map.
 We say that a groupoid $G$ which was defined in  such way was {\bf  dynamically defined}. We will consider a more general class of dynamically defined groupoids in section \ref{haar-dy}.

We denote by $m$ a fixed  {\it a priori} probability on $K$ (with support equal to $K$).
Consider the transverse function $\hat{\nu}$, such that, for each $y\in \Omega$ and continuous function $f:[y]\to\mathbb{R}$,
we have

 $$\int f(x) \hat{\nu}^y(dx) = \int  f(a,y_2,y_3,...) \, d m(a) .$$

In the case $K=\{1,2,...,d\}$ is natural to take the probability $m$  such that  each point in $K$  has $m$-mass equal to $1/d$, but, ``by no means'' this has to be the only choice. Similarly, when $K=S^1$ it is also natural to consider the Lebesgue probability $d a$ as the {\it a priori} probability (see \cite{BCLMS}).

\medskip

Given a H\"{o}lder function $V: \Omega \to \mathbb{R}$, the associated Ruelle operator (acting on continuous functions) is defined by
$$ f \to \, \mathcal{L}_V (f) (x)= \int e^{ V(a,x_1,x_2,...)} f(a,x_1,x_2,...) \, d m(a).$$

Consider  a H\"{o}lder function $V: \Omega \to \mathbb{R}$ and take as modular function $\delta(x,y)= e^{V(y)- V(x)}.$  Denote by $c$ the eigenvalue and $\varphi$ the eigenfunction for the transfer (Ruelle) operator $\mathcal{L}_V$. We denote by $\rho$ the eigenprobability for $\mathcal{L}_V^*,$ which satisfies $L_V^{*}(\rho)=c\rho$. In this way, if $\int \varphi \, d\rho = 1$, the probability $\mu:=\varphi\, \rho$ is the equilibrium probability for $V$ (see \cite{PP}). We denote by $U$ the normalized H\"{o}lder potential
$$U= V + \log \varphi - \log (\varphi \circ \sigma) - \log (c).$$

Given  $k_0\in K$, consider fixed  the point $z_0=(k_0)^\infty=(k_0,k_0,...) \in \Omega$.
As, for any continuous function $g: \Omega \to \mathbb{R}$,
$$\int g(x) d \mu(x) = \lim_{n \to \infty} \mathcal{L}_U^n (g)  \,(z_0)  ,$$
then, for any continuous function $h$,
$$\int h(x) d \rho(x) = \int \frac{h(x)}{\varphi(x)} d \mu(x) = \lim_{n \to \infty} \mathcal{L}_U^n \left(\frac{h}{\varphi}\right)  \,(z_0)   =  \lim_{n \to \infty} \frac{1}{c^n \varphi
(z_0)}\,\, \mathcal{L}^n_V (h)\,(z_0). $$
Consequently
$$\int h(x) d \rho(x)=        \lim_{n \to \infty} \frac{\mathcal{L}_V^n (h)  \,(z_0)}{\mathcal{L}_V^n (1)  \,(z_0) } .$$

Such kind of expression appears in \cite{CL2}. The next result is a generalization of a similar one in section 4 in \cite{CLM}.

\begin{proposition} \label{mimeo}
Under above hypotheses and notations, the eigenprobability $\rho$ for the dual Ruelle
operator $\mathcal{L}_V ^*$ is quasi-invariant for the modular function $\delta(x,y)= e^{V(y)- V(x)}$, that is, for all continuous function $f:G\to\mathbb{R}$ we have
$$\iint f(y,x)\, \hat{\nu}^y (dx) d \rho( y) =\iint f(x,y)  \delta(x,y)^{-1} \hat{\nu}^y (dx) d \rho(y).$$
\end{proposition}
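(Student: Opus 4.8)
The plan is to rewrite both sides of the quasi-invariance identity using the fibered structure of the groupoid, and then to reduce everything to a single conformality relation satisfied by $\rho$ as eigenprobability. First I would parametrize $G$: since $x\sim y$ means $\sigma(x)=\sigma(y)$, every pair $(x,y)\in G$ is determined by the common tail $w=\sigma(x)=\sigma(y)\in\Omega$ together with the two leading symbols $x_1,y_1\in K$. Writing $(b,w)$ for the point with first coordinate $b$ and tail $w$, I set $\tilde f(b,a,w):=f((b,w),(a,w))$. Using the defining property $\int g(x)\,\hat\nu^y(dx)=\int g(b,y_2,y_3,\dots)\,dm(b)$, the left-hand side becomes $\int_\Omega\int_K \tilde f(y_1,b,\sigma(y))\,dm(b)\,d\rho(y)$, while, since $\delta(x,y)^{-1}=e^{V(x)-V(y)}$, the right-hand side becomes $\int_\Omega\int_K \tilde f(b,y_1,\sigma(y))\,e^{V(b,\sigma(y))-V(y)}\,dm(b)\,d\rho(y)$.

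The key step is to convert the eigenprobability condition $\mathcal{L}_V^*\rho=c\rho$ into a disintegration statement. Pairing $\mathcal{L}_V g$ with $\rho$ and expanding the definition of $\mathcal{L}_V$, the substitution $z=(a,x)$ (so $z_1=a$, $\sigma(z)=x$) gives $\int \mathcal{L}_V(g)\,d\rho=\int_K\int_\Omega e^{V(z)}g(z)\,d\rho(\sigma z)\,dm(z_1)$. Since this equals $c\int g\,d\rho$ for every continuous $g$ and continuous functions separate measures, I obtain the conformality relation
\[ d\rho(z)=\tfrac{1}{c}\,e^{V(z)}\,dm(z_1)\,d\rho(\sigma z), \]
meaning that for every continuous $\Psi$ one has $\int\Psi\,d\rho=\tfrac1c\int_K\int_\Omega \Psi(a,w)\,e^{V(a,w)}\,d\rho(w)\,dm(a)$, where $(a,w)$ denotes the point with first coordinate $a$ and tail $w$.

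Finally I would apply this relation to the outer $d\rho$-integral on each side. Substituting into the left-hand side and using $V(y)=V(y_1,\sigma y)$ turns it into $\tfrac1c\int_\Omega\int_K\int_K \tilde f(a,b,w)\,e^{V(a,w)}\,dm(a)\,dm(b)\,d\rho(w)$; substituting into the right-hand side, the factor $e^{V(a,w)}$ produced by the relation exactly cancels the $e^{-V(y)}$ weight and leaves $\tfrac1c\int_\Omega\int_K\int_K \tilde f(b,a,w)\,e^{V(b,w)}\,dm(a)\,dm(b)\,d\rho(w)$. Because $a$ and $b$ are both integrated against $m$, Fubini lets me interchange their names in the second expression, making it identical to the first, which finishes the proof. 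The only genuinely delicate point is the derivation of the conformality relation and the bookkeeping of the weight $e^{V}$; once that is in place the identity is a pure relabeling symmetry between the two leading coordinates, and the H\"older regularity of $V$ together with compactness of $\Omega=K^{\mathbb N}$ guarantees that all integrals are finite and that Fubini applies.
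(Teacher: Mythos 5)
Your proof is correct, but it follows a genuinely different route from the paper's. The paper characterizes $\rho$ through the limit $\int h\,d\rho=\lim_n \mathcal{L}_V^n(h)(z_0)/\mathcal{L}_V^n(1)(z_0)$ at a reference point $z_0$, expands $\mathcal{L}_V^n$ as an $n$-fold integral over $dm(a_1)\cdots dm(a_n)$, performs the swap of the first coordinate with an auxiliary variable $a$ inside that finite-level integral, and only then passes to the limit. You instead extract directly from $\mathcal{L}_V^*\rho=c\rho$ the one-step conformality identity $\int\Psi\,d\rho=\tfrac1c\iint\Psi(a,w)e^{V(a,w)}\,d\rho(w)\,dm(a)$ and carry out the same relabeling symmetry $a\leftrightarrow b$ at level one, where the weight $e^{V(a,w)}$ visibly cancels $e^{-V(y)}$. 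Both arguments rest on the identical symmetry of $m\otimes m$ in the two leading coordinates, but your version is shorter and buys some genuine generality: it needs neither the eigenfunction $\varphi$ nor the convergence of $\mathcal{L}_U^n$ (which is where the H\"older hypothesis actually does work in the paper's proof), so it applies verbatim to \emph{any} eigenprobability of $\mathcal{L}_V^*$ for any positive eigenvalue, with $V$ merely continuous --- consistent with the paper's remark that $\rho$ is not the unique such quasi-invariant probability. One cosmetic caveat: the displayed relation $d\rho(z)=\tfrac1c e^{V(z)}dm(z_1)\,d\rho(\sigma z)$ should be read only through the integral identity you state immediately after it (justified by Riesz uniqueness on the compact space $\Omega$, or simply by applying it only to the continuous integrands $\Psi$ and $\Phi$ that actually occur); with that reading every step checks out.
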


\begin{proof}
In this proof we denote $dm(a_1)dm(a_2)...dm(a_n)$ by $dm(a_1,...,a_n)$.  We write $y=(y_1,y_2,y_3,...)$ and for $x\in [y]$ we write $x= (a,y_2,y_3,...)$.
Let us define two auxiliary functions
\[g_1(y) :=\int f((a,y_2,y_3,...),(y_1,y_2,y_3,...) e^{V(a,y_2,y_3,...)} dm(a),\]
and
\[g_2(y):=\int f((y_1,y_2,y_3,...),(a,y_2,y_3,...) dm(a). \]

Then,
\[\iint f(x,y)  \delta(x,y)^{-1} \hat{\nu}^y (dx) d \rho(y)= \iint f(x,y) e^{V(x)- V(y)}  \hat{\nu}^y (dx) d \rho(y)\]
\[=\iint f((a,y_2,y_3,...),(y_1,y_2,y_3,...) e^{V(a,y_2,y_3,...)} dm(a)e^{- V(y_1,y_2,y_3,...)}  d \rho(y)  \]
\[= \int g_1(y)e^{-V(y)}\,d\rho(y)  =  \lim_{n \to \infty} \frac{\mathcal{L}_V^n (g_1\cdot e^{-V})  \,(z_0)}{\mathcal{L}_V^n (1)  \,(z_0) }\]
\footnotesize
\[= \lim_{n \to \infty} \frac{\int e^{S_n(V)(a_1,...,a_n,z_0)}g_1(a_1,...,a_n,z_0)e^{-V(a_1,...,a_n,z_0)} \,dm(a_1,...,a_n) }{\mathcal{L}_V^n (1)  \,(z_0) }\]
\[=  \lim_{n \to \infty} \frac{\int e^{S_{n-1}(V)(a_2,...,a_n,z_0)}g_1(a_1,...,a_n,z_0) \,dm(a_1,...,a_n) }{\mathcal{L}_V^n (1)  \,(z_0) }\]
\[=  \lim_{n \to \infty} \frac{\iint e^{S_{n}(V)(a,a_2,...,a_n,z_0)}f((a,a_2,...,a_n,z_0),(a_1,...,a_n,z_0)) \,dm(a)dm(a_1,...,a_n) }{\mathcal{L}_V^n (1)  \,(z_0) }\]
\[=  \lim_{n \to \infty} \frac{\iint e^{S_{n}(V)(a_1,a_2,...,a_n,z_0)}f((a_1,a_2,...,a_n,z_0),(a,a_2,...,a_n,z_0)) \,dm(a)dm(a_1,...,a_n) }{\mathcal{L}_V^n (1)  \,(z_0) }\]
\[=  \lim_{n \to \infty} \frac{\int e^{S_{n}(V)(a_1,a_2,...,a_n,z_0)}g_2(a_1,a_2,...,a_n,z_0) \,dm(a_1,...,a_n) }{\mathcal{L}_V^n (1)  \,(z_0) }\]
\normalsize
\[=\int g_2(y)\,d\rho(y)=\iint f((y_1,y_2,y_3,...),(a,y_2,y_3...)dm(a)      d\rho(y)\]
\[=\int  f(y,x) \hat{\nu}^{y}(dx) d\rho(y).\]

\end{proof}

\medskip

We point out that the above probability $\rho$ is not the unique quasi-stationary probability for such $\delta$ (see end of section 4 in \cite{CLM}).
\medskip

\section{A Thermodynamic Formalism point of view for Haar Systems} \label{nonc}

Now we return to the analysis of general Haar Systems (not necessarily as the previous generalized XY model).
We consider a metric space $\Omega$ with the Borel sigma-algebra $\mathcal{B}$ and a measurable groupoid $G$. In all this section is fixed a Haar-system $(G,\hat{\nu})$ where the transverse function $\hat{\nu}$ (see definition \ref{krt}) satisfies $\int \,1\, \hat{\nu}^y (dx) =1, \,\,\forall y$.

In the present setting the dynamical action  is replaced by the equivalence relation which is described by the groupoid $G$.
The  transverse function $\hat{\nu}$  will play here the role of the {\it a priori} probability in the Thermodynamic Formalism for the generalized $ X Y$ model.

\medskip
\subsection{A transfer operator for Haar systems} \label{prem}

We will consider modular functions in the form $\delta(x,y) = e^{V(y)-V(x)}$, where $V: \Omega \to \mathbb{R}$ is a bounded and {\bf measurable function} . Then a probability $M$ on  $\Omega$ is quasi-invariant  for the Haar system $(G,\hat{\nu})$ and $V$, if  satisfies the property
$$\iint f(y,x)\, \hat{\nu}^y (dx) d M( y) =\iint f(x,y)  e^{V(x)-V(y)} \hat{\nu}^y (dx) d M(y),$$ for all measurable and bounded function $f$ (see definition \ref{xix}).

\begin{definition}\label{normalized} A bounded and measurable function $V:\Omega \to\mathbb{R}$ is \textbf{Haar-normalized} for the Haar system $(G,\hat{\nu})$ (or simply, $\hat{\nu}$-normalized) if it satisfies
\[ \int e^{V(x)}\, \hat{\nu}^{y}(dx) = 1 , \,\,\,\forall\, y \in \Omega. \]
\end{definition}

The above property corresponds in classical Thermodynamic Formalism to the concept of normalized potential for the Ruelle operator. Note that we do not assume that $V$ is of H\"{o}lder class.

\begin{definition} \label{refs}  A {\bf Haar-invariant probability} for the { Haar system $(G,\hat{\nu})$} will be  a probability $M$ on $\Omega$, such that, for some Haar-normalized function $V:\Omega \to \mathbb{R}$,
$$\iint f(y,x)\, \hat{\nu}^y (dx) d M( y) =\iint f(x,y)  e^{V(x)-V(y)} \hat{\nu}^y (dx) d M(y),$$
for all measurable bounded function $f$.
\end{definition}

\begin{remark} \label{rur}        Any Haar-invariant measure is  quasi-invariant.
\end{remark}

\begin{remark}\label{rur1}
In the Proposition  \ref{mimeo} the probability $\rho$, which is an eigenprobability for $\mathcal{L}_V^*$, is also quasi-invariant. It is necessary to assume that $V$ is a normalized potential (for the Ruelle operator) in order to exhibit the case where the probability $\rho$ is an invariant measure for the shift map. In this case such normalized potential $V$ is also Haar-normalized and $\rho$ is also a {Haar-invariant measure} for the Haar system $(G,\hat{\nu})$.
\end{remark}

\bigskip

   A probability $M$ is Haar-invariant and associated to the normalized function $V$, iff, for any test function $f$ we have
\begin{equation}
\label{eq3}
\iint f(y,x)e^{V(x)}\,\hat{\nu}^{y}(dx)dM(y) = \iint f(x,y)e^{V(x)}\,\hat{\nu}^{y}(dx)dM(y).
\end{equation}

Furthermore, if $M$ is Haar-invariant and associated to the normalized function $V$ then, considering the particular case where $f(z,w)=f(w)$, we get from (\ref{eq3}) that
\begin{equation}\label{invariant}
 \iint f(x) e^{V(x)}\hat{\nu}^{y}(dx)dM(y)=\int f(y) dM(y) .
 \end{equation}
It follows that $M$ is a fixed point for the operator $H_V^{*}$, defined below.

\begin{definition}
Given a measurable and bounded function $U:\Omega \to \mathbb{R}$ we define the operator $H_U$ acting in measurable and bounded functions by
\begin{equation} \label{transR} H_U(f)(y) = \int e^{U(x)}f(x) \hat{\nu}^{y}(dx).
\end{equation}
If $V$ is Haar-normalized, the dual operator $H_V^{*}$ restrict to the convex set of probabilities on $\Omega$ satisfies, for any measurable and bounded function $f$,
\begin{equation}\label{dual}
 \int f \, dH_V^{*}(M_1) := \int e^{V(x)} \, f (x) \,  \hat{\nu}^{y} (dx) d M_1 (y) = \int H_V(f) dM_1.
\end{equation}
\end{definition}

\bigskip

The above operator $H_V$ {\bf is not the Ruelle operator }  when one considers the particular setting of section \ref{insp}. If $\mathcal{L}_V$ is the Ruelle operator, then,
$$ H_V(f) (y)= \mathcal{L}_V (f) (\sigma(y)),$$
where $\sigma$ is the shift map. We remark also that $H_V$ is not the Haar-Ruelle operator studied in \cite{LO}.

\begin{proposition}\label{fixdual} $M$ is Haar-invariant for $(G,\hat{\nu})$, iff, there exists a Haar-normalized (measurable) function $V$, such that,
	$M$ is a fixed point for the  operator $H_V^{*}$ defined in (\ref{dual}).
We will call $e^V$ \textbf{a Haar-Jacobian of $M$}.

\end{proposition}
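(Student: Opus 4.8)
My plan is to prove the two implications separately, noting that one of them is essentially already carried out in the discussion preceding the statement. For the forward implication, suppose $M$ is Haar-invariant for some Haar-normalized $V$, so that \eqref{eq3} holds for every bounded measurable $f$. Specializing to a test function depending only on the second coordinate, $f(z,w)=g(w)$, the right-hand side of \eqref{eq3} becomes $\int g(y)\bigl(\int e^{V(x)}\hat\nu^{y}(dx)\bigr)dM(y)=\int g\,dM$ by the Haar-normalization of $V$ (Definition~\ref{normalized}), while the left-hand side is $\iint g(x)e^{V(x)}\hat\nu^{y}(dx)dM(y)$. This is exactly \eqref{invariant}, and by the defining relation \eqref{dual} it says $H_V^{*}(M)=M$. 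So this direction requires no additional argument.

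The substance is the converse: assuming $V$ is Haar-normalized and $H_V^{*}(M)=M$, I would recover the full two-variable symmetry \eqref{eq3} (which is equivalent to Haar-invariance by the substitution relating Definition~\ref{refs} and \eqref{eq3}). The genuine difficulty is that the fixed-point relation $\int H_V(g)\,dM=\int g\,dM$ is a statement about functions of \emph{one} variable, whereas \eqref{eq3} is a symmetry for functions of two variables, so the missing information must be manufactured. The mechanism I would exploit is that the transverse function is constant on equivalence classes (Definition~\ref{krt}): for any bounded measurable $g$, the function $\bar g:=H_V(g)$ is \emph{class-invariant}, since $\hat\nu^{y}$ depends only on $[y]$; moreover $\hat\nu^{y}$ is supported on $[y]$.

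I would first verify \eqref{eq3} on product test functions $f(a,b)=\phi(a)\psi(b)$, where the two sides read $\int\phi\,\bar\psi\,dM$ and $\int\psi\,\bar\phi\,dM$. Applying the fixed-point identity to the bounded measurable function $\phi\bar\psi$ gives $\int\phi\bar\psi\,dM=\int H_V(\phi\bar\psi)\,dM$; and because $\bar\psi(x)=\bar\psi(y)$ for every $x$ in the support $[y]$ of $\hat\nu^{y}$ (class-invariance), the factor $\bar\psi$ pulls out, yielding $H_V(\phi\bar\psi)(y)=\bar\psi(y)\,H_V(\phi)(y)=\bar\psi(y)\bar\phi(y)$, hence $\int\phi\bar\psi\,dM=\int\bar\phi\bar\psi\,dM$. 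The symmetric computation applied to $\psi\bar\phi$ gives $\int\psi\bar\phi\,dM=\int\bar\phi\bar\psi\,dM$. The two right-hand sides coincide, so \eqref{eq3} holds for all products.

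To conclude, I would extend from products to arbitrary bounded measurable $f\colon G\to\mathbb{R}$ by a functional monotone class argument: the family of $f$ satisfying \eqref{eq3} is a vector space, closed under bounded pointwise (dominated) limits since $\hat\nu^{y}$ and $M$ are probabilities and $e^{V}$ is bounded, and it contains all $1_{A}(a)1_{B}(b)$; as the sets $(A\times B)\cap G$ form a $\pi$-system generating the induced Borel $\sigma$-algebra of $G$, the theorem delivers every bounded measurable $f$, i.e. Haar-invariance with the same $V$. I expect the only genuinely delicate point to be the converse's reduction just described—turning the one-variable fixed-point relation into the two-variable symmetry via class-invariance of $H_V(\cdot)$ together with the support property; the forward implication and the approximation step are routine.
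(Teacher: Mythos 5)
Your proof is correct, and its engine is the same one the paper uses: apply the fixed-point identity a second time and exploit that $\hat\nu^{z}=\hat\nu^{y}$ for $z$ in the support $[y]$ of $\hat\nu^{y}$, so that each side of (\ref{eq3}) collapses onto a common symmetric expression. The structural difference is where you apply it. The paper runs the computation directly on a general bounded measurable $f$: setting $F(y)=\int f(y,x)e^{V(x)}\hat\nu^{y}(dx)$ and $G(y)=\int f(x,y)e^{V(x)}\hat\nu^{y}(dx)$, the hypothesis $H_V^{*}(M)=M$ turns each side into the triple integral $\iiint f(z,x)e^{V(x)+V(z)}\hat\nu^{y}(dx)\hat\nu^{y}(dz)dM(y)$ (respectively its $x\leftrightarrow z$ relabeling), and Fubini finishes. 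You instead specialize to products $\phi\otimes\psi$, where the same symmetrization reads $\int\phi\bar\psi\,dM=\int\bar\phi\bar\psi\,dM=\int\psi\bar\phi\,dM$, and then extend by a functional monotone class argument. Your version is correct, but the extension step is avoidable: the paper's computation is literally your product computation with $f$ left general, so nothing is gained by the reduction, and the monotone class step introduces a small technical dependency you would otherwise not need --- namely that the rectangles $(A\times B)\cap G$ generate the Borel $\sigma$-algebra that $G$ inherits from the product topology, which is automatic for separable $\Omega$ but not for a general metric space (the paper does not assume separability in this section). If you keep your route, you should either add separability as a hypothesis or note that the induced $\sigma$-algebra is taken to be the trace of the product $\sigma$-algebra; if you drop the reduction to products and run your symmetrization on a general $f$, the issue disappears and you recover the paper's proof. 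The forward implication and your use of class-invariance of $H_V(\cdot)$ together with the support property of $\hat\nu^{y}$ are exactly as in the paper.
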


\begin{proof}
In the above was shown that any Haar-invariant measure is a fixed point for the operator $H_V^{*}$.
Now we suppose that a probability $M$ satisfies, for any measurable and bounded function $F$,
\[ \iint  e^{V(z)}F(z)\hat{\nu}^{y}(dz)dM(y)=\int F(y) dM(y), \]
where $V$ is Haar-normalized. In this way we want to prove that $M$ is Haar invariant, that is, it satisfies (\ref{eq3}).

We begin  analyzing the left hand side of (\ref{eq3}). We fix a test function $f$, and, let	$F(y):= \int f(y,x)e^{V(x)}\,\hat{\nu}^{y}(dx)$. Then,
\[ \iint f(y,x)e^{V(x)}\,\hat{\nu}^{y}(dx)dM(y) = \int F(y) dM(y)\stackrel{hypothesis}{=}\]
\[ = \iint F(z) e^{V(z)}\hat{\nu}^{y}(dz)dM(y) = \iiint f(z,x)e^{V(x)}\,\hat{\nu}^{z}(dx) e^{V(z)}\hat{\nu}^{y}(dz)dM(y) \]
\[\stackrel{\hat{\nu}^{z}=\hat{\nu}^{y}}{=} \iiint f(z,x)e^{V(x)}\,\hat{\nu}^{y}(dx) e^{V(z)}\hat{\nu}^{y}(dz)dM(y) \]
\[= \iiint f(z,x)e^{V(x)+V(z)}\,\hat{\nu}^{y}(dx) \hat{\nu}^{y}(dz)dM(y). \]
Now we apply similar computations on the right hand side of (\ref{eq3}).
\[ \iint f(x,y)e^{V(x)}\,\hat{\nu}^{y}(dx)dM(y) = \iiint f(x,z)e^{V(x)}\hat{\nu}^{z}(dx)e^{V(z)}\hat{\nu}^{y}(dz)dM(y)\]
\[=\iiint f(x,z)e^{V(x)+V(z)}\,\hat{\nu}^{y}(dx) \hat{\nu}^{y}(dz)dM(y)\]
\[\stackrel{x \leftrightarrow z}{=} \iiint f(z,x)e^{V(x)+V(z)}\,\hat{\nu}^{y}(dz) \hat{\nu}^{y}(dx)dM(y).\]
It follows from Fubini's Theorem that the two sides of (\ref{eq3}) are equal.	
\end{proof}

\medskip

In the present setting - where there is no dynamics - the above result shows us that a natural way for getting the analogous concept of  invariant measure can be obtained via a transfer operator (which is analogous to the Ruelle operator in symbolic dynamics). We observe that in this setting the operator is defined from an {\it a priori} measure that depends of the point $y$, which is the transverse function $\hat{\nu}$.

\bigskip

In the sequel on this section we describe some properties of the operators $H_U$ and $H_V^*$.

\begin{proposition}\label{eigenvalue} For any given measurable and bounded function $U$, consider the operator $H_U$ as defined in  (\ref{transR}) and the function $\tilde U (y) = \int e^{U(x)}\hat{\nu}^{y}(dx)$, which is constant on classes.
	\newline
1. If $f$ is constant on classes then \[H_U(f)(y) = \tilde U(y)f(y) .   \]
2. The function $V:=U-\log(\tilde U)$ is Haar-normalized.\newline
3. If there exists some positive eigenfunction $g$ (for a certain eigenvalue) for the operator $H_U$, then $\tilde U$ need to be constant. This constant value  $\tilde U$  is the corresponding eigenvalue (it is also positive). \newline
4. If $\tilde U$ is constant and $\lambda:=\tilde U(y)\,,\,\forall y$, then a (measurable) function $g$ is eigenfunction for $H_U$, if and only if, $g$ is constant on classes. In this case $H_U(g)=\lambda g$.
\end{proposition}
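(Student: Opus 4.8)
The plan is to extract one structural fact that drives all four items: for every bounded measurable $g$, the function $H_U(g)$ is automatically constant on classes. Indeed, if $y\sim y'$ then the defining property of a transverse function, $\hat{\nu}^{y}=\hat{\nu}^{y'}$, gives $H_U(g)(y)=\int e^{U(x)}g(x)\,\hat{\nu}^{y}(dx)=\int e^{U(x)}g(x)\,\hat{\nu}^{y'}(dx)=H_U(g)(y')$. Taking $g\equiv 1$ recovers the remark in the statement that $\tilde U=H_U(1)$ is constant on classes, and since $e^{U}>0$ while each $\hat{\nu}^{y}$ is a probability, we get $\tilde U(y)>0$ for every $y$, so $\log\tilde U$ is well defined. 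I would record these observations once and reuse them.

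For item 1 I would use that $\hat{\nu}^{y}$ is supported on $[y]$: if $f$ is constant on classes then $f(x)=f(y)$ for $\hat{\nu}^{y}$-almost every $x$ (namely for $x\in[y]$), so $f(y)$ factors out of the integral, yielding $H_U(f)(y)=f(y)\int e^{U(x)}\,\hat{\nu}^{y}(dx)=\tilde U(y)f(y)$. Item 2 is then a direct computation: since $e^{V(x)}=e^{U(x)}/\tilde U(x)$ and $\tilde U(x)=\tilde U(y)$ for $x\in[y]$, one has $\int e^{V(x)}\,\hat{\nu}^{y}(dx)=\tilde U(y)^{-1}\int e^{U(x)}\,\hat{\nu}^{y}(dx)=\tilde U(y)^{-1}\tilde U(y)=1$, so $V$ is Haar-normalized.

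For item 3, let $g>0$ satisfy $H_U(g)=\beta g$. First I would note $\beta>0$: positivity of $e^{U}$ and of the probability $\hat{\nu}^{y}$ forces $H_U(g)(y)>0$, hence $\beta=H_U(g)(y)/g(y)>0$. Because $H_U(g)$ is constant on classes and $\beta\neq 0$, the function $g=\beta^{-1}H_U(g)$ is constant on classes; applying item 1 gives $\tilde U(y)g(y)=H_U(g)(y)=\beta g(y)$, and dividing by $g(y)>0$ yields $\tilde U(y)=\beta$ for all $y$. Thus $\tilde U$ is forced to be the constant $\beta$, which is positive, as claimed.

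For item 4, assume $\tilde U\equiv\lambda$. The implication ``constant on classes $\Rightarrow$ eigenfunction'' is immediate from item 1, giving $H_U(g)=\tilde U g=\lambda g$. For the converse, if $H_U(g)=\mu g$ then $\mu g=H_U(g)$ is constant on classes; when $\mu\neq 0$ this forces $g$ constant on classes, and item 1 then gives $\lambda g=H_U(g)=\mu g$, so $\mu=\lambda$. The one delicate point, which I expect to be the main obstacle, is the eigenvalue $\mu=0$: the identity $H_U(g)=0$ does not by itself make $g$ constant on classes, since on a class carrying enough mass a sign-changing $g$ can integrate to zero against $e^{U}\hat{\nu}^{y}$. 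I would handle this by adopting the standing convention that an eigenfunction carries a nonzero eigenvalue (equivalently, by restricting attention to the spectral value $\lambda$ singled out by item 1), after which the forward implication goes through exactly as above.
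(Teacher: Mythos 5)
Your proof is correct and follows essentially the same route as the paper's: item 1 by factoring $f(y)$ out of the integral using the support of $\hat{\nu}^{y}$, item 2 by applying item 1 to $1/\tilde U$, and items 3--4 by exploiting that $H_U(g)$ is always constant on classes. You are in fact slightly more careful than the paper at two points: you establish $\beta>0$ before dividing in item 3, and you explicitly flag the degenerate eigenvalue $\mu=0$ in item 4, which the paper's proof (``following the proof of 3.'') silently assumes away.
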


\begin{proof}-\newline
Proof of 1.
\[H_U(f)(y) =\int e^{U(x)}f(x) \hat{\nu}^{y}(dx) = \int e^{U(x)}f(y) \hat{\nu}^{y}(dx) \]\[=  [\int e^{U(x)} \hat{\nu}^{y}(dx)]f(y)=\tilde U(y)f(y). \]
Proof of 2.
\[ \int e^{U(x)-\log(\tilde U(x))} \,\hat{\nu}^y(dx) = H_U(\frac{1}{\tilde U})(y) \stackrel{item \, 1.}{=} \frac{\tilde U (y)}{\tilde U(y)} =1.    \]	
Proof of 3. Suppose that for some measurable and bounded function $g>0$ and real number $\lambda$ we have that $H_U(g)=\lambda g$. As $H_U(g)$ is constant on classes and $H_U(g)=\lambda g$, the function $g$ is necessarily constant on classes too. It follows from 1. that $H_U(g) =\tilde U g$ and therefore $\lambda g = \tilde U g$. 	As $g$ is positive we get $\tilde U = \lambda$. It follows from definition of $\tilde U$ that $\lambda> 0$. \newline
Proof of 4. If $g$ is constant on classes, then, from 1. we get that $H_U(g)=\tilde U g = \lambda g$. On the other hand, if $g$ is an eigenfunction of $H_U$, following the proof of 3., we get that $g$ is constant on classes, and, furthermore $H_U(g)=\lambda g$.

\end{proof}

 In the above result the function $\tilde U$ plays the role of the eigenvalue of the operator $H_U$.
The normalization procedure (getting a Haar-normalized $V$ from the given $U$) described by item 2. on the above proposition is much more simpler that the corresponding one in Thermodynamic Formalism (where one has to add a coboundary).

 \medskip

\begin{corollary} Suppose that $V$ is Haar-normalized. Consider the operator $H_V$ defined by $(\ref{transR})$.
	If $f$ is constant on classes, then, $H_V(f)=f$. Particularly, $H_V\circ H_V = H_V$.
\end{corollary}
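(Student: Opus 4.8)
The plan is to derive both assertions directly from Proposition \ref{eigenvalue}, specialized to $U=V$, so that essentially no new computation is needed. Since $V$ is Haar-normalized, the auxiliary function attached to it in that proposition, namely $\tilde V(y)=\int e^{V(x)}\hat{\nu}^{y}(dx)$, is identically equal to $1$ by Definition \ref{normalized}. Item 1 of Proposition \ref{eigenvalue} then says that for any $f$ constant on classes one has $H_V(f)(y)=\tilde V(y)f(y)=1\cdot f(y)=f(y)$, which is exactly the first claim $H_V(f)=f$. One could equally well unwind the definition \eqref{transR} and pull the value $f(y)$ out of the integral using that $f(x)=f(y)$ for $x\in[y]$, but invoking item 1 is cleaner.

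For the idempotency $H_V\circ H_V=H_V$, the single point I would isolate is that $H_V(g)$ is \emph{automatically constant on classes} for every bounded measurable $g$. This is precisely where the transverse-function hypothesis enters: by Definition \ref{krt}, $\hat{\nu}^{x}=\hat{\nu}^{y}$ whenever $x\sim y$, so for $x\sim y$ we get $H_V(g)(x)=\int e^{V(w)}g(w)\,\hat{\nu}^{x}(dw)=\int e^{V(w)}g(w)\,\hat{\nu}^{y}(dw)=H_V(g)(y)$. Having established that $H_V(g)$ is constant on classes, I would simply apply the first claim to the function $H_V(g)$ in place of $f$, obtaining $H_V\big(H_V(g)\big)=H_V(g)$ for all bounded measurable $g$, which is the operator identity $H_V\circ H_V=H_V$.

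I do not expect a genuine obstacle here; the statement is a formal consequence of two facts already available in the excerpt. The only conceptual step worth flagging is the observation that the image of $H_V$ lands in the functions constant on classes, a feature that is built into the definition of a transverse function and is what makes $H_V$ a projection once $V$ is normalized. Everything else is a direct substitution, so the proof will be short.
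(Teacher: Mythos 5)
Your proof is correct and follows exactly the paper's own argument: specialize Proposition \ref{eigenvalue} with $\tilde V\equiv 1$ to get $H_V(f)=f$ on class-constant functions, then note that $H_V(g)$ is always constant on classes (you helpfully spell out that this comes from $\hat{\nu}^{x}=\hat{\nu}^{y}$ for $x\sim y$, which the paper leaves implicit). No issues.
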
 	
\begin{proof}
	If $V$ is normalized, then, $\tilde V = 1$, and consequently, if $f$ is constant on classes, $H_V(f)=\tilde V f = f$. Consequently, $H_V (H_V(g)) = H_V(g)$ for any measurable and bounded function $g$, because $H_V(g)$ is constant on classes.
\end{proof}

\begin{example}
If we consider the Groupoid  defined from the equivalence relation $x \sim y$, iff, $x=y$, then we have $[y]=\{y\}$, and, therefore $\hat{\nu}$ is trivial, that is, $\hat{\nu}^y = \delta_{y}$, over the set $\{y\}$. In this case, the unique Haar-normalized function is $V\equiv 0$. Furthermore, for any function $U$ we get that $\tilde U = e^{U}$ and $U-\log(\tilde U) = 0=V$. In this model is quite simple to see that $H_V = Id$ and consequently any probability $M$ is fixed for $H_V^{*}$. Then, we have:\newline
1. The fixed probability of  $H_V^{*}$ is not unique\newline
2. If $f$ is not constant, $H_V^{n}(f):=H_V^{n-1} \circ H_V(f)=f$ does not converge to a constant (it does not converge, for instance, to any possible given $\int f \,dM$).
\end{example}

Analyzing equation (\ref{invariant}) it is  natural the following reasoning: this equation could be solved independently for each class $[y]$ and after this, the solutions could be combined (adding class by class) in order to get a probability $M$ over all the space $\Omega$. Furthermore, the weight that $M$ has on each class seems to have no relevance in order to get a Haar-invariant measure. This  remark is more formally presented  in the next theorem.

\begin{theorem}\label{Mandmu} Let $V$ be a Haar-normalized function and $\mu$ be any probability measure on $\Omega$. There exists a unique Haar-invariant probability $M$ with Jacobian $e^{V}$ and such that, for any bounded and measurable function $g$, constant on classes, we get
	\[ \int g\,dM = \int g\, d\mu.\]
\end{theorem}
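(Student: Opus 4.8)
The plan is to realize the informal idea stated just before the theorem — solve equation (\ref{invariant}) class by class, with the per-class weight supplied by $\mu$ — in one clean formula, by taking $M$ to be the image of $\mu$ under the dual operator of (\ref{dual}). Concretely I would set $M := H_V^{*}(\mu)$, i.e. define $M$ to be the probability determined by
$$\int f\,dM = \int H_V(f)\,d\mu = \iint e^{V(x)}f(x)\,\hat{\nu}^{y}(dx)\,d\mu(y)$$
for every bounded measurable $f$. First I would check this prescription genuinely defines a Borel probability. Setting $M(A) := \int H_V(\mathbf{1}_A)\,d\mu$, positivity is clear; measurability of $y\mapsto \int_A e^{V(x)}\hat{\nu}^{y}(dx)$ follows from item 2 of Definition~\ref{kernel} by approximating $e^{V}\mathbf{1}_A$ with simple functions (using that $e^{V}$ is bounded); and countable additivity follows from monotone convergence applied first inside each $\hat{\nu}^{y}$ and then under $d\mu$. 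Taking $f\equiv 1$ and using that $V$ is Haar-normalized gives $M(\Omega)=\int H_V(1)\,d\mu=1$, so $M$ is a probability, and the identity $\int f\,dM=\int H_V(f)\,d\mu$ extends from indicators to all bounded measurable $f$ by monotone convergence.

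The marginal property is then immediate. If $g$ is bounded, measurable, and constant on classes, the Corollary following Proposition~\ref{eigenvalue} gives $H_V(g)=g$, whence $\int g\,dM=\int H_V(g)\,d\mu=\int g\,d\mu$, as required.

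To show $M$ is Haar-invariant with Jacobian $e^{V}$, by Proposition~\ref{fixdual} it suffices to verify that $M$ is a fixed point of $H_V^{*}$. The structural fact I would lean on is that $H_V(f)$ is always constant on classes (because $\hat{\nu}^{y}$ depends only on the class of $y$), which by the same Corollary yields $H_V\circ H_V=H_V$. Hence, for every bounded measurable $f$,
$$\int f\,dH_V^{*}(M)=\int H_V(f)\,dM=\int H_V(H_V(f))\,d\mu=\int H_V(f)\,d\mu=\int f\,dM,$$
so $H_V^{*}(M)=M$ and $M$ is Haar-invariant by Proposition~\ref{fixdual}.

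Uniqueness uses exactly the same two ingredients and is essentially forced. Let $M'$ be any Haar-invariant probability with Jacobian $e^{V}$ agreeing with $\mu$ on class-constant functions. Since $M'$ is a fixed point of $H_V^{*}$, for every bounded measurable $f$ we have $\int f\,dM'=\int H_V(f)\,dM'$; but $H_V(f)$ is constant on classes, so the marginal hypothesis on $M'$ gives $\int H_V(f)\,dM'=\int H_V(f)\,d\mu=\int f\,dM$. Thus $\int f\,dM'=\int f\,dM$ for all such $f$, forcing $M'=M$. I expect no conceptual obstacle here; the only step demanding genuine care is the routine but necessary verification that $M:=H_V^{*}(\mu)$ is a bona fide countably additive Borel probability rather than merely a positive normalized functional, which is precisely where the kernel measurability axiom and monotone convergence are invoked.
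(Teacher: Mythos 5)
Your proposal is correct and follows essentially the same route as the paper: define $M=H_V^{*}(\mu)$, use the idempotence $H_V\circ H_V=H_V$ to get the fixed-point property via Proposition \ref{fixdual}, use that $H_V(g)=g$ for class-constant $g$ to get the marginal condition, and derive uniqueness from the fact that $H_V(f)$ is always constant on classes. Your extra care in checking that $H_V^{*}(\mu)$ is a genuine countably additive Borel probability is a welcome addition that the paper leaves implicit.
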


\begin{proof}
Let $M= H_V^{*}(\mu)$. Then, for any integrable function $f$ we have
\[\int  f\, dM = \int H_V(f)\, d\mu. \]
Particularly, as $H_V\circ H_V =H_V$ we get, for any integrable function $f$,
\[ \int H_V(f)\,dM = \int H_V(H_V(f))\,d\mu = \int H_V(f)\,d\mu = \int f\,dM.   \]
This shows (see Proposition \ref{fixdual}) that $M$ is Haar-invariant with Jacobian $e^{V}$. Furthermore,
for any $g$ constant on classes we have
\[ \int g(y)\,dM(y) \stackrel{M= H_V^{*}(\mu)}{=} \iint e^{V(x)} g(x)\,\hat{\nu}^y(dx)d\mu(y)\]
\[\stackrel{g(x)=g(y)}{=} \int g(y)\int e^{V(x)}\,\hat{\nu}^y(dx)\,\,d\mu(y)=\int g(y)\,d\mu(y). \]
Suppose now that $M_1$ and $M_2$ are Haar-invariant measures with Jacobian $e^{V}$ satisfying
	\[ \int g\,dM_1 = \int g\, d\mu=\int g\,dM_2,\]
for any bounded function $g$ constant on classes.
As, for any bounded function $f$, the function $H_V(f)$ is constant on classes, we get
\[\int f\, dM_1 = \int H_V(f) \,dM_1 =\int H_V(f) \,dM_2 = \int f\, dM_2.\]

\end{proof}

\begin{corollary}\label{sup} Let $V$ be a Haar-normalized function and $g$ be a measurable and bounded function which is constant on classes. Then, we have
\begin{align*}
\sup_{y\in \Omega}g(y)&= \sup\left\{ \int g\,dM\,|\, M \,is\, probability\, Haar-invariant\right\}\\
&= \sup\left\{ \int g\,dM\, | \, M \,is\, prob.\,\, Haar-invariant\, with \, Jacobian \, e^{V} \right\} .
\end{align*}

\end{corollary}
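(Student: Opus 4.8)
The plan is to prove the chain $S_2 \le S_1 \le S_0 \le S_2$, where I abbreviate $S_0 := \sup_{y\in\Omega} g(y)$, write $S_1$ for the supremum of $\int g\,dM$ over all Haar-invariant probabilities, and $S_2$ for the supremum of $\int g\,dM$ over all Haar-invariant probabilities with Jacobian $e^{V}$. Two of the links are immediate. Since every Haar-invariant probability with Jacobian $e^{V}$ is in particular Haar-invariant, the feasible set defining $S_2$ is contained in the one defining $S_1$, so $S_2 \le S_1$. Moreover, for any probability $M$ on $\Omega$ one has $\int g\,dM \le \sup_{y} g(y) = S_0$, and taking the supremum over all Haar-invariant $M$ yields $S_1 \le S_0$. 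Thus the only nontrivial step is the reverse inequality $S_0 \le S_2$.

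For this I would exploit Theorem \ref{Mandmu}, which for the fixed Haar-normalized $V$ and \emph{any} probability $\mu$ on $\Omega$ produces a Haar-invariant probability $M$ with Jacobian $e^{V}$ satisfying $\int h\,dM = \int h\,d\mu$ for every bounded measurable $h$ that is constant on classes. The key observation is that the hypothesis of the corollary is exactly that $g$ is constant on classes, so $g$ lies in the test-function class of Theorem \ref{Mandmu}. Feeding in Dirac measures $\mu = \delta_y$ then yields, for each $y \in \Omega$, a Haar-invariant probability $M$ with Jacobian $e^{V}$ for which $\int g\,dM = \int g\,d\delta_y = g(y)$.

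Hence, given $\varepsilon > 0$, I would first choose $y \in \Omega$ with $g(y) > S_0 - \varepsilon$, which is possible by the definition of the supremum, and then apply Theorem \ref{Mandmu} to $\delta_y$ to obtain an admissible $M$ with $\int g\,dM = g(y) > S_0 - \varepsilon$. This shows $S_2 > S_0 - \varepsilon$, and since $\varepsilon > 0$ is arbitrary we conclude $S_0 \le S_2$. Combined with $S_2 \le S_1 \le S_0$, all three quantities coincide, which is the assertion.

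The computation is essentially trivial once Theorem \ref{Mandmu} is in hand; there is no genuine analytic obstacle, only the conceptual step of recognizing that Dirac measures transport the pointwise supremum of $g$ into an integral against a Haar-invariant probability of the prescribed type. I would only pause to confirm that $\delta_y$ is a legitimate probability on $(\Omega,\mathcal{B})$ and that $g$, being measurable and bounded, satisfies $\int g\,d\delta_y = g(y)$, both of which are immediate.
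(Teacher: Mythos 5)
Your argument is correct and is essentially identical to the paper's own proof: both establish the trivial chain of inequalities and then obtain the reverse inequality by applying Theorem \ref{Mandmu} to Dirac measures $\delta_{y}$ at points nearly attaining $\sup_{y}g(y)$. No gaps.
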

\begin{proof}
	Clearly,
	\[\sup_{y\in \Omega}g(y) \geq \sup\left\{ \int g\,dM\,|\, M \,is\, probability\, Haar-invariant\right\}. \]
	\[\geq \sup\left\{ \int g\,dM\, | \, M \, is\,probability\, Haar-invariant\, with \, Jacobian \, e^{V} \right\} .\]
	
On the other hand, for any given $\epsilon>0$, let $y_\epsilon \in \Omega$ be such that $g(y_\epsilon) + \epsilon >  \sup_{y\in \Omega}g(y)$. Let $\mu_\epsilon = \delta_{y_\epsilon}$. From above theorem there exists a probability $M_\epsilon$ Haar-invariant, with Jacobian $e^{V}$,  such that,
\[ \int g\, dM_\epsilon = \int g\, d\delta_{y_\epsilon} = g(y_\epsilon) > (\sup_{y\in \Omega}g(y)) -\epsilon.  \]
Taking the supremum over \, Haar-invariant\,probabilities with \, Jacobian \, $e^{V}$ and observing that $\epsilon$ is arbitrary we completed the proof.
	
\end{proof}

\subsection{Transverse measures and Haar-invariant probabilities} \label{trans}

General references on transverse measures are  \cite{K} and \cite{Con}. Remember that we consider fixed a certain Haar-system $(G,\hat{\nu})$ where the transverse function $\hat{\nu}$ satisfies 
$\int 1\,\,\hat{\nu}^y (dx) =1, \,\,\forall y$.

In this section we study in our setting   the relation between a transverse measure $\Lambda$ and a Haar-invariant probability $M$.  We propose a more direct and simple discussion, under the present setting, similar to the one that appears in section 5 of \cite{CLM}  but with some new proofs. The goal here is to prove Theorem \ref{equivalent} (which has a similar claim in section 5 in  \cite{CLM} but it will improved here).   We remark that in \cite{CLM} it is not considered  Haar invariant probabilities. We will start by remembering the following result stated  in \cite{CLM}.

\begin{proposition} Given a transverse function $\hat{\nu}$, a modular function $\delta(x,y)$ and a quasi-invariant probability $M$, if $\hat{\nu} * \lambda_1 = \hat{\nu} * \lambda_2 $, where $\lambda_1,\lambda_2$ are kernels, then,
$$ \int \delta^{-1} \lambda_1(1) \, \,d\, M=\int \delta^{-1} \lambda_2(1) \, \,d\, M.$$
This means
	\[\iint \delta(y,x) \lambda_1^y (dx) M(y)  =
	\iint \delta(y,x) \lambda_2^y (dx) M (y).\]
\end{proposition}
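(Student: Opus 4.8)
The plan is to feed the hypothesis $\hat{\nu}*\lambda_1 = \hat{\nu}*\lambda_2$ a carefully chosen test function, integrate the resulting identity of functions of $y$ against $M$, and then use the cocycle identity for $\delta$ together with the quasi-invariance of $M$ to recognize both sides as the desired integrals. Throughout I would write $\psi_i(y) := \int \delta(y,x)\,\lambda_i^y(dx)$, so that $\psi_i = (\delta^{-1}\lambda_i)(1)$ and the assertion is exactly $\int \psi_1\,dM = \int \psi_2\,dM$.

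First I would apply the equal kernels $\hat{\nu}*\lambda_1$ and $\hat{\nu}*\lambda_2$ to the test function $f(s,y) = \delta(y,s)$ and integrate against $dM(y)$. Using the convolution formula \eqref{convolution}, this produces $L_1 = L_2$, where
\[ L_i = \iiint \delta(y,s)\,\lambda_i^x(ds)\,\hat{\nu}^y(dx)\,dM(y). \]
The hypothesis gives $(\hat{\nu}*\lambda_1)(f)(y) = (\hat{\nu}*\lambda_2)(f)(y)$ for every $y$, so integrating in $M$ preserves the equality; this is the only place the hypothesis enters.

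Next I would rewrite each $L_i$ as $\int \psi_i\,dM$. Since $x$ ranges over $\mathrm{supp}\,\hat{\nu}^y \subset [y]$ and $s$ over $\mathrm{supp}\,\lambda_i^x \subset [x] = [y]$, all three points $s,x,y$ lie in a common class, so the modular property (Definition \ref{Har}) gives $\delta(y,s) = \delta(y,x)\,\delta(x,s)$. Factoring and recognizing $\int \delta(x,s)\,\lambda_i^x(ds) = \psi_i(x)$ yields
\[ L_i = \iint \delta(y,x)\,\psi_i(x)\,\hat{\nu}^y(dx)\,dM(y). \]
Now I would apply the quasi-invariance of $M$ (Definition \ref{xix}) with the test function $f(u,v) = \delta(u,v)\,\psi_i(v)$: the right-hand side of the quasi-invariance identity carries the weight $\delta(x,y)^{-1}$, which cancels the factor $\delta(x,y)$ coming from $f(x,y)=\delta(x,y)\psi_i(y)$, leaving $\iint \psi_i(y)\,\hat{\nu}^y(dx)\,dM(y) = \int \psi_i\,dM$, the last equality using $\int \hat{\nu}^y(dx) = 1$. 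Combining with $L_1 = L_2$ gives $\int \psi_1\,dM = \int \psi_2\,dM$, which is the claim.

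The main point to handle carefully is that all integration variables remain within a single equivalence class, so that the cocycle identity $\delta(y,s) = \delta(y,x)\delta(x,s)$ is legitimate; this is guaranteed by the support condition in Definition \ref{kernel}. Beyond that, the only technical caveat is the admissibility of the test functions involving $\delta$ (so that Fubini and the quasi-invariance identity apply), which is automatic when $\delta(x,y) = e^{V(y)-V(x)}$ with $V$ bounded — the case relevant for the rest of the paper — and should otherwise be read as an integrability hypothesis.
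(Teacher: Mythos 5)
Your proof is correct: feeding the test function $f(s,y)=\delta(y,s)$ to the equal kernels $\hat{\nu}*\lambda_1=\hat{\nu}*\lambda_2$, factoring $\delta(y,s)=\delta(y,x)\delta(x,s)$ via the cocycle and support conditions, and then applying quasi-invariance to $f(u,v)=\delta(u,v)\psi_i(v)$ (so that the $\delta(x,y)^{-1}$ weight cancels and $\hat{\nu}^y(\Omega)=1$ finishes the computation) yields exactly $\int\psi_1\,dM=\int\psi_2\,dM$, and you correctly flag the only caveat, namely the integrability/boundedness of these $\delta$-weighted test functions, which is automatic in the paper's case $\delta(x,y)=e^{V(y)-V(x)}$ with $V$ bounded. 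The paper gives no internal proof of this proposition (it cites Proposition 65 of \cite{CLM}), so there is nothing to compare line by line, but your argument is the natural self-contained one and is valid.
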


{\bf Proof:} See proposition 65 in the section 5 of \cite{CLM}.
\qed

\bigskip

As $\hat{\nu}^{y}$ is a probability, for any transverse function $\nu$ we get
\[ \int f(s,y) \,{\nu}^{y}(ds)  = \int f(s,y) \nu^{y}(ds)\hat{\nu}^{y}(dx) \]\[=\int f(s,y) \nu^{x}(ds)\hat{\nu}^{y}(dx) \stackrel{(\ref{convolution})}{=} \int f(s,y) (\hat{\nu}* \nu)^{y}(ds) .\]
This shows that $\nu= \hat{\nu}*\nu$  for any transverse function $\nu$.
Consequently, if $\lambda$ is any kernel such that $\nu= \hat{\nu}*\lambda$, then $\hat{\nu}*\nu=\hat{\nu}*\lambda$. Applying the above proposition, we get, for any quasi-invariant probability $M$ for $\hat{\nu}$ and $\delta$,
\[\nu= \hat{\nu}*\lambda\,\,\,\text{implies}\,\,\,\iint \delta(y,x) \lambda^y (dx) d M(y)  =
\iint \delta(y,x) \nu^y (dx) d M(y).\]

Given the transverse function $\hat{\nu}$, a modular function $\delta(x,y)$ and an associated quasi-invariant measure $M$, we define (see Theorem 66 in the section 5 of \cite{CLM} or  \cite{Con}) a transverse measure $\Lambda $ as
$$\Lambda (\nu) = \,\iint \delta(y,x) \lambda^y (dx)
d M(y),$$ where $\lambda$ is any kernel satisfying $\nu= \hat{\nu}*\lambda$.

From now on we will consider a modular function $\delta(x,y)=e^{V(y)-V(x)}$ where $V$ is $\hat{\nu}$-normalized and a Haar-invariant probability $M$ with Jacobian $e^{V}$. Furthermore, as in the present setting $\hat{\nu}^{y}$ is a probability,  we can take $\lambda = \nu$, and define
\begin{equation}\label{eq1}
\Lambda (\nu) = \iint e^{V(x)-V(y)} \nu^y (dx)\,d {M}(y).
\end{equation}

The first result below provides an alternative expression for $\Lambda$ in (\ref{eq1}).

\begin{proposition} Suppose that $M$ is a Haar-invariant probability associated to the Jacobian $e^V$. Then, for any transverse function $\nu$, we get
\begin{equation}\label{eq2}
 \Lambda(\nu) := \iint e^{V(x)-V(y)} \nu^y (dx)\,d {M}(y) =\iint e^{V(x)} \nu^y (dx)d {M}(y).
\end{equation}

\end{proposition}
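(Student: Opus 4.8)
The plan is to express both integrals in (\ref{eq2}) in terms of the single measurable function
\[ G(y) := \int e^{V(x)}\,\nu^y(dx), \]
and then invoke the fixed-point property of $M$ under $H_V^{*}$ recorded in Proposition \ref{fixdual} and equation (\ref{invariant}). The first key remark is that $G$ is \emph{constant on classes}: if $x \sim y$ then $\nu^x = \nu^y$ because $\nu$ is a transverse function, hence $G(x) = G(y)$. Since $V(y)$ is independent of the integration variable $x$, the left-hand side of (\ref{eq2}) rewrites as $\int e^{-V(y)} G(y)\,dM(y)$, whereas the right-hand side is exactly $\int G\,dM$. Thus the whole statement reduces to the identity $\int e^{-V} G\,dM = \int G\,dM$ for the class-constant, nonnegative function $G$.

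To prove this reduced identity I would first handle bounded $G$ and then pass to the limit. For $n \in \mathbb{N}$ set $G_n := \min(G,n)$, which is bounded, nonnegative, and still constant on classes. Applying (\ref{invariant}) to the bounded test function $e^{-V} G_n$ gives
\[ \int e^{-V(y)} G_n(y)\,dM(y) = \iint e^{V(x)}\,e^{-V(x)} G_n(x)\,\hat{\nu}^y(dx)\,dM(y) = \iint G_n(x)\,\hat{\nu}^y(dx)\,dM(y), \]
where the weight $e^{V(x)}$ produced by the operator cancels the factor $e^{-V(x)}$. Because $G_n$ is constant on classes and $\hat{\nu}^y$ is a probability supported on $[y]$, the inner integral collapses to $G_n(y)$, so that $\int e^{-V} G_n\,dM = \int G_n\,dM$ for every $n$.

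Finally I would let $n \to \infty$. Since $G_n \uparrow G$ and $e^{-V} \geq 0$, the monotone convergence theorem applied to both sides yields $\int e^{-V} G\,dM = \int G\,dM$, which unwinds to the desired identity. The only delicate point — and the reason the truncation is needed — is that a general transverse function $\nu$ may carry infinite mass, so $G$ can fail to be bounded (it may even take the value $+\infty$), while the fixed-point identity (\ref{invariant}) and the dual relation (\ref{dual}) are stated only for bounded test functions. The monotone truncation of $G$, which crucially preserves the class-constancy that makes the exponential cancellation work, is exactly what bridges this gap; I do not expect any further obstacle, as the support condition on $\hat{\nu}^y$ and the normalization $\int 1\,\hat{\nu}^y(dx)=1$ make the collapse of the inner integral immediate.
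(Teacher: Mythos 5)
Your argument is correct and follows essentially the same route as the paper: both reduce the left-hand side to $\int e^{-V}G\,dM$ with $G(y)=\int e^{V(x)}\nu^y(dx)$, apply the fixed-point identity $H_V^*(M)=M$ so that the weight $e^{V(x)}$ cancels the factor $e^{-V(x)}$, and then use the class-constancy of $G$ to collapse the inner $\hat{\nu}^y$-integral. Your monotone truncation $G_n=\min(G,n)$ is a small extra precaution (the paper applies the fixed-point identity directly, implicitly assuming integrability), but it does not change the substance of the proof.
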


\begin{proof}
	As $M$ is Haar-invariant (for the fixed transverse function $\hat{\nu}$) and associated to the Jacobian $e^V$, if we call $F(y)=e^{-V(y)}\int e^{V(x)}\nu^{y}(dx)$, then we get from (\ref{eq1}) and Proposition \ref{fixdual} that, for any transverse function $\nu$,
	\[ \Lambda (\nu) = \iint e^{V(x)-V(y)} \nu^y (dx)\,d {M}(y)= \int F(y)\,dM(y) \]
	\[\stackrel{H_V^*(M)=M}{=} \iint e^{V(x)}F(x)\,\hat{\nu}^{y}(dx)dM(y) \]
 \[=\iint e^{V(x)}\left[e^{-V(x)}\int e^{V(s)}\nu^{x}(ds)\right]\,\hat{\nu}^{y}(dx)dM(y) \]
	\[= \iiint e^{V(s)}\nu^{x}(ds)\,\hat{\nu}^{y}(dx)dM(y) = \iiint e^{V(s)}\nu^{y}(ds)\,\hat{\nu}^{y}(dx)dM(y) \]
	\[= \iint e^{V(s)}\nu^{y}(ds)\,dM(y) = \iint e^{V(x)}\nu^{y}(dx)\,dM(y). \]
\end{proof}

 \begin{definition}  \label{ret} We say that a transverse measure $\Lambda$ is a {\bf Haar-invariant transverse probability} if it has modulus $\delta(x,y)=e^{V(y)-V(x)}$, where $V$ is a Haar-normalized function, and, furthermore, $\Lambda(\hat{\nu}) =1$.

 	We denote by $\mathcal{M} (\hat{\nu})$ { the set of all Haar-invariant transverse  probabilities} $\Lambda$ for the Haar system $(G,\hat{\nu})$.
 	
 \end{definition}

The next result corresponds to  the Theorem 66 in  \cite{CLM}.

\begin{proposition}\label{istransm}
	Suppose that $M$ is a Haar-invariant probability associated to the Jacobian $e^V$. Then, $\Lambda$ as defined by expression (\ref{eq2}) is a  Haar-invariant transverse  probability.
\end{proposition}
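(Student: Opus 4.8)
The plan is to verify the two defining conditions of Definition \ref{ret}: first, that $\Lambda$ given by (\ref{eq2}) really is a transverse measure for the modular function $\delta(x,y)=e^{V(y)-V(x)}$ (that is, it satisfies the invariance property (\ref{gogo}) from Definition \ref{coco}), and second, the normalization $\Lambda(\hat\nu)=1$. Linearity of $\Lambda$ on $\mathcal{E}^+$ is immediate from the integral formula (\ref{eq2}), so the content lies in these two points.

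For the invariance property, suppose $\nu_1*(\delta\lambda)=\nu_2$ for some kernel $\lambda$ with $\lambda^y(1)=1$ for all $y$; I must show $\Lambda(\nu_1)=\Lambda(\nu_2)$. The natural approach is to unwind the convolution (\ref{convolution}) together with the modular factor $\delta$ and plug into the formula $\Lambda(\nu)=\iint e^{V(x)}\nu^y(dx)\,dM(y)$. Writing out $\nu_2^y(ds)=\int \delta(x,y)\lambda^x(ds)\,\nu_1^y(dx)$, the factor $e^{V(x)}$ inside $\Lambda(\nu_2)$ combines with the $\delta$-weight; here I expect the transverse property $\nu_1^x=\nu_1^y$ (and the analogous fact for $\lambda$ when evaluated against functions constant on classes) to let me reduce the triple integral back to $\Lambda(\nu_1)$, using $\lambda^y(1)=1$ to collapse the innermost integration and using that $M$ is Haar-invariant (equivalently $H_V^*(M)=M$, Proposition \ref{fixdual}) to absorb the remaining weight. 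This is the main obstacle: keeping careful track of which point the integrating variables lie over and correctly applying $\hat\nu^x=\hat\nu^y$ on classes so that the modular cocycle telescopes.

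A cleaner route that I would actually prefer is to avoid re-deriving invariance from scratch and instead invoke the construction already recorded in the excerpt. Just before Definition \ref{ret} it is stated (via Theorem 66 of \cite{CLM} or \cite{Con}) that $\Lambda(\nu)=\iint \delta(y,x)\lambda^y(dx)\,dM(y)$, with $\lambda$ any kernel satisfying $\nu=\hat\nu*\lambda$, is a transverse measure whenever $M$ is quasi-invariant. Since a Haar-invariant $M$ is in particular quasi-invariant (Remark \ref{rur}), and since in the present setting $\hat\nu^y$ is a probability so that one may take $\lambda=\nu$ and the displayed formula reduces precisely to (\ref{eq1}), the transverse-measure property of $\Lambda$ is inherited directly. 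Thus the first half of the claim follows from the cited Proposition together with the reduction $\nu=\hat\nu*\nu$ established earlier in the section.

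It then remains only to check the normalization $\Lambda(\hat\nu)=1$. Using the alternative expression (\ref{eq2}) with $\nu=\hat\nu$, I compute
\[
\Lambda(\hat\nu)=\iint e^{V(x)}\,\hat\nu^y(dx)\,dM(y)=\int\left(\int e^{V(x)}\,\hat\nu^y(dx)\right)dM(y)=\int 1\,dM(y)=1,
\]
where the inner integral equals $1$ for every $y$ because $V$ is Haar-normalized (Definition \ref{normalized}) and $M$ is a probability. Finally, the modulus of $\Lambda$ is $\delta(x,y)=e^{V(y)-V(x)}$ with $V$ Haar-normalized by construction, so all the requirements of Definition \ref{ret} are met and $\Lambda$ is a Haar-invariant transverse probability.
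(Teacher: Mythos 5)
Your proposal reaches the right conclusion, but it takes a different route from the paper and leaves the substantive part either unexecuted or outsourced. The paper's proof is a short direct computation: expand $\nu_2^y(ds)=\int\delta(s,x)\,\lambda^x(ds)\,\nu_1^y(dx)$ inside $\Lambda(\nu_2)=\iint e^{V(s)}\nu_2^y(ds)\,dM(y)$, observe that the modular cocycle telescopes pointwise, $e^{V(s)}\delta(s,x)=e^{V(s)}e^{V(x)-V(s)}=e^{V(x)}$, and then use $\lambda^x(1)=1$ to collapse the innermost integral, giving $\Lambda(\nu_2)=\iint e^{V(x)}\nu_1^y(dx)\,dM(y)=\Lambda(\nu_1)$. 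Notice that the Haar-invariance of $M$ (i.e.\ $H_V^*(M)=M$) plays no role in this step at all --- the measure $dM(y)$ is a spectator --- so the step you flagged as ``the main obstacle'' and proposed to resolve by absorbing weights via $H_V^*(M)=M$ is in fact much simpler than you anticipated, and your sketch of it is a mild misdiagnosis (though not a fatal one, since you never commit to the details). Your preferred alternative --- inheriting the transverse-measure property from the construction $\Lambda(\nu)=\iint\delta(y,x)\lambda^y(dx)\,dM(y)$ stated just before Definition \ref{ret} --- is logically coherent (Haar-invariant implies quasi-invariant by Remark \ref{rur}, and $\lambda=\nu$ is admissible since $\hat\nu^y$ is a probability), but within this paper that construction is only \emph{asserted} with a citation to Theorem 66 of \cite{CLM}; Proposition \ref{istransm} is precisely the paper's self-contained rendering of that cited result in the case $\delta(x,y)=e^{V(y)-V(x)}$, so leaning on the citation is close to circular relative to the paper's structure, even if it is acceptable as an appeal to external literature. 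Your verification of $\Lambda(\hat\nu)=1$ and of linearity matches the paper and is correct.
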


\begin{proof}
Clearly $\Lambda(\hat{\nu}) =1$. We want to show that $\Lambda$ satisfies Definition \ref{coco} with $\delta(x,y)=e^{V(y)-V(x)}$.  From (\ref{eq2}) 	$\Lambda$ is linear over transverse functions. Furthermore, if $\nu_1*(\delta\lambda) =\nu_2$, with $\lambda^{y}(1)=1\,,\forall\,y$, and $\delta(x,y)=e^{V(y)-V(x)}$, then, we have
	$$ \Lambda(\nu_2) =  \iint e^{V(x)}\, \nu_2^y (dx)d {M}(y) = 	\iiint  e^{V(s)} \delta(s,x)\,\lambda^{x}(ds) \nu_1^y (dx)d {M}(y)$$
	$$= 	\iiint e^{V(s)} e^{V(x)-V(s)}\,\lambda^{x}(ds) \nu_1^y (dx)d {M}(y)= 	\iiint  e^{V(x)} \,\lambda^{x}(ds) \nu_1^y (dx)d {M}(y)$$
	$$= 	\iiint 1\, \lambda^{x}(ds) e^{V(x)} \nu_1^y (dx)d {M}(y)= 	\iint  e^{V(x)} \,\nu_1^y (dx)d {M}(y) =\Lambda(\nu_1).$$ 	 	
\end{proof}

The next two results complete the study of the relation between a Haar-invariant transverse probability $\Lambda$ and a  Haar-invariant probability $M$.

\begin{proposition}\label{density} 	Suppose that $M$ is Haar-invariant and associated to the Jacobian $e^V$. Let $\Lambda$ be the  transverse measure as defined by expression (\ref{eq2}). Consider the transverse function  $\nu^y(dx) = F(x)\hat{\nu}^y(dx)$,  where $F$ is measurable and bounded. Then,
	$$\Lambda(\nu)= \int F(x)\,\, d M(x).$$
\end{proposition}

{\bf Proof:}
As $M$ is quasi-invariant (see remark \ref{rur}) we have
$$\Lambda (\nu) := \iint e^{V(x)-V(y)} \nu^y (dx)
d {M}(y) =  \iint e^{V(x)-V(y)} F(x)\, \hat{\nu}^y (dx)
d {M}(y)$$
$$=\iint  F(y)\, \hat{\nu}^y (dx)
d {M}(y)=\int   F(y) \int 1\,\hat{\nu}^y (dx)
d {M}(y)=\int   F(y) \,
d {M}(y).$$ \qed

\begin{remark} \label{re1}
The above theorem says   that the transverse function $\nu$ is a more general concept than a function $F$ and the transverse measure $\Lambda$ is a more general concept than a measure $M$.	Note that if any class of the equivalence relation is finite, then,  given any  transverse function $\nu$, there exists a function $F$, such that,  $\nu^y(dx) = F(x)\hat{\nu}^y(dx)$.
\end{remark}

\begin{remark}
If we consider a more general density $\nu^y(dx) = F(x,y)\hat{\nu}^y(dx)$, then, $\nu^y(dx)$ is a kernel but not a transverse function, except if $F(x,y)=F(x,z)$, for any $z\in[y]$. But in this case, as $x\in [y]$, we get that $F(x,y)=F(x,x)$, that is, $F$ depends only of $x$ as in the above theorem.
\end{remark}

The next result shows us that any Haar-invariant transverse probability of modulus $\delta(x,y)=e^{V(y)-V(x)}$ is of the form (\ref{eq2}).

\begin{proposition}\label{return} Let $\Lambda$ be a Haar-invariant transverse probability for the modular function $\delta(x,y)=e^{V(y)-V(x)}$, where $V$ is Haar-normalized. Let us define a probability $M$ on $\Omega$ which satisfies, for any measurable and bounded function $F:\Omega \to \mathbb{R}$,
	\[ \int F(y)dM(y):=\Lambda(F(x)\hat{\nu}^y(dx))  .\]
Then, $M$ is a Haar-invariant probability with Jacobian $e^V$. Furthermore, for any transverse function $\nu$ we have
\[\Lambda (\nu) = \iint e^{V(x)}\nu^{y}(dx)dM(y). \]
\end{proposition}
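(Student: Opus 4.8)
The plan is to verify first that $M$ is a genuine probability and then to reduce \emph{both} conclusions to a single application of the defining property of a transverse measure (Definition \ref{coco}), using one carefully chosen auxiliary kernel.

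To see that $M$ is well defined I would note that for bounded measurable $F\geq 0$ the family $\nu_F^y(dx):=F(x)\hat\nu^y(dx)$ is a transverse function in $\mathcal{E}^+$: it is supported on $[y]$, is measurable in $y$, and satisfies $\nu_F^x=\nu_F^y$ for $x\sim y$ because $\hat\nu^x=\hat\nu^y$ and the density depends only on the first coordinate. Hence $\Lambda(\nu_F)\geq 0$. The map $F\mapsto\Lambda(F\hat\nu)$ is linear (by linearity of $\Lambda$, since $(aF_1+bF_2)\hat\nu=aF_1\hat\nu+bF_2\hat\nu$), is positive, and sends the constant $1$ to $\Lambda(\hat\nu)=1$ because $\Lambda$ is a Haar-invariant transverse probability (Definition \ref{ret}). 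In the measurable category this positive normalized linear functional is integration against a unique probability $M$, namely the one in the statement; the only delicate point is $\sigma$-additivity, i.e. continuity of $\Lambda$ under monotone limits of transverse functions, which I would take from the framework.

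The heart of the proof is the following computation, valid for an arbitrary transverse function $\nu$. Set $N(y):=\int e^{V(x)}\nu^y(dx)$, which is constant on classes because $\nu^y$ is. I would introduce the kernel
\[ \lambda^x(ds):=e^{V(s)}\,\hat\nu^x(ds), \]
which satisfies $\lambda^x(1)=\int e^{V(s)}\hat\nu^x(ds)=1$ exactly because $V$ is Haar-normalized (Definition \ref{normalized}). Then $(\delta\lambda)^x(ds)=e^{V(x)-V(s)}e^{V(s)}\hat\nu^x(ds)=e^{V(x)}\hat\nu^x(ds)$, and the convolution formula (\ref{convolution}) gives, for every test function $f$,
\[ (\nu*(\delta\lambda))(f)(y)=\iint f(s,y)\,e^{V(x)}\hat\nu^x(ds)\,\nu^y(dx)=N(y)\int f(s,y)\,\hat\nu^y(ds), \]
where I used $\hat\nu^x=\hat\nu^y$ for $x\in[y]$ to separate the two integrals. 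Thus $\nu*(\delta\lambda)=N\hat\nu$, so Definition \ref{coco} yields $\Lambda(\nu)=\Lambda(N\hat\nu)$, and since $N$ is constant on classes the definition of $M$ gives $\Lambda(N\hat\nu)=\int N\,dM$, that is,
\[ \Lambda(\nu)=\int N(y)\,dM(y)=\iint e^{V(x)}\nu^y(dx)\,dM(y), \]
which is precisely the second assertion (the positive case suffices, the signed case following by decomposition, and the unbounded case by monotone approximation $N\wedge k$). Specializing to $\nu=F\hat\nu$ gives Haar-invariance: the left-hand side is $\Lambda(F\hat\nu)=\int F\,dM$ by the definition of $M$, while the displayed identity gives $\iint e^{V(x)}F(x)\hat\nu^y(dx)\,dM(y)=\int F\,dH_V^*(M)$; equating the two shows $\int F\,dM=\int F\,dH_V^*(M)$ for all bounded measurable $F$, i.e. $H_V^*(M)=M$, so Proposition \ref{fixdual} gives that $M$ is Haar-invariant with Jacobian $e^V$.

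I expect the main obstacle to be spotting the right auxiliary kernel $\lambda^x(ds)=e^{V(s)}\hat\nu^x(ds)$: it is engineered so that multiplying by the modular factor $\delta(s,x)=e^{V(x)-V(s)}$ cancels the $e^{V(s)}$ and turns the convolution into a product of two independent integrals, collapsing the arbitrary $\nu$ onto a transverse function of the form $(\text{class constant})\times\hat\nu$, to which the definition of $M$ applies verbatim. Everything else is bookkeeping, apart from the $\sigma$-additivity point flagged above.
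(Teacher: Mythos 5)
Your proposal is correct and follows essentially the same route as the paper: you use the identical auxiliary kernel $\lambda^x(ds)=e^{V(s)}\hat\nu^x(ds)$, establish the same key identity $\nu*(\delta\lambda)=\bigl(\int e^{V}d\nu^y\bigr)\hat\nu$, and then invoke Definition \ref{coco} together with the specialization $\nu=F\hat\nu$ and Proposition \ref{fixdual} exactly as the paper does. Your extra remarks on $\sigma$-additivity of $M$ and on unbounded class-constants are reasonable refinements of points the paper passes over, but they do not change the argument.
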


\begin{proof}
Let $\lambda^{y}(dx) := e^{V(x)}\hat{\nu}^{y}(dx)$. Then, as $V$ is normalized, $\lambda^{y}(1) = 1 \,,\,\forall y$.	
Claim: $(\nu * (\delta \lambda))^{y}(dz) = C(z) \hat{\nu}^{y}(dz)$, where
$$C(z) := \int e^{V(x)}\nu^{z}(dx)=\int e^{V(x)}\nu^{y}(dx)=C(y)$$ is a constant function on the class of $y$.	

\bigskip
In order to prove the claim we consider any test function $f(x,y)$. Then, for each fixed $y$
$$\int f(x,y) [\nu * (\delta \lambda)]^y(dx) = \iint f(s,y) \delta(s,x)\lambda^{x}(ds)\nu^{y}(dx) $$
$$= \iint f(s,y) e^{V(x)-V(s)} e^{V(s)}\hat{\nu}^{x}(ds)\nu^{y}(dx) = \iint f(s,y) e^{V(x)}\hat{\nu}^{x}(ds)\nu^{y}(dx) $$
$$= \int [\int f(s,y) \hat{\nu}^{x}(ds)] e^{V(x)}\nu^{y}(dx) = \int [\int f(s,y) \hat{\nu}^{y}(ds)] e^{V(x)}\nu^{y}(dx) $$	
$$= [\int f(s,y) \hat{\nu}^{y}(ds)][\int  e^{V(x)}\nu^{y}(dx)] = [\int f(s,y) \hat{\nu}^{y}(ds)][C(y)] $$
$$= \int f(s,y) C(y)\hat{\nu}^{y}(ds) = \int f(x,y) C(y)\hat{\nu}^{y}(dx). $$
This proves the claim.

\bigskip

Let $M$ be defined by
\[ \int F(y)dM(y):=\Lambda(F(x)\hat{\nu}^y(dx)), \]
for any measurable and bounded function $F$. As $\Lambda$ is linear and $\Lambda(\hat{\nu})=1$ we obtain that $M$ is a probability on $\Omega$.

 As $\Lambda$ is a transverse measure  it follows from the claim (see Definition \ref{coco}) that
\[ \Lambda(\nu) = \Lambda ( C(z) \hat{\nu}^{y}(dz))  = \int C(y)\,dM(y) = \iint e^{V(x)}\nu^{y}(dx)dM(y). \]

It remains to prove that $M$ is Haar-invariant with Jacobian $e^{V}$. Let $f:\Omega \to [0,+\infty)$  be a measurable and bounded function and define $\nu^{y}(dx) =f(x)\hat{\nu}^y(dx)$. It follows from the above claim that
\[(\nu * (\delta \lambda))^{y}(dz) =  \int e^{V(x)}f(x)\hat{\nu}^{z}(dx) \hat{\nu}^{y}(dz) ,\]
and, then, as $\Lambda$ is a transverse measure we get
\[\Lambda(f(x)\hat{\nu}^y(dx)) =\Lambda(\nu)=\Lambda(\nu * (\delta \lambda))=\Lambda\left(\int e^{V(x)}f(x)\hat{\nu}^{z}(dx) \hat{\nu}^{y}(dz)\right). \]
Therefore, by definition of $M$, we finally get
\[ \int f(x) \,dM(x) = \int \int e^{V(x)}f(x)\hat{\nu}^{z}(dx) \,dM(z),  \]
which, by linearity can extend the claim for any measurable and bounded function $f$. This shows that $M$ is Haar-invariant with Jacobian $e^{V}$.
\end{proof}	

Now, we summarize the results of  this section.

\begin{theorem}\label{equivalent} Let $V$ be a Haar-normalized function. There is an invertible map that associate for each Haar-invariant probability $M$ over $\Omega$, with Jacobian $e^{V}$, a Haar-invariant transverse probability $\Lambda$ of modulus $\delta(x,y)=e^{V(y)-V(x)}$. For any given $M$ the associated $\Lambda$ obtained by this map satisfies
\[\Lambda(\nu):=\iint e^{V(x)} \nu^y (dx)d {M}(y),\,\, \nu \in \mathcal{E}^+.  \]
On the other hand, given $\Lambda$, the associated $M$ by the inverse map satisfies
\[ \int F(y)\,dM(y):=\Lambda(F(x)\hat{\nu}^{y}(dx)),\, \text{for any}\, F \, \text{measurable and bounded}. \]
\end{theorem}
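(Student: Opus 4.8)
The plan is to recognize that this theorem is a bookkeeping assembly of the three preceding propositions, so the work consists in naming the two maps, citing the propositions that make each well-defined, and then verifying that the compositions are the identity in both directions. Concretely, I would define the \emph{forward map} $\Phi$ sending a Haar-invariant probability $M$ with Jacobian $e^V$ to the functional
\[ \Phi(M)(\nu) := \iint e^{V(x)}\,\nu^y(dx)\,dM(y), \qquad \nu \in \mathcal{E}^+, \]
and the \emph{backward map} $\Psi$ sending a Haar-invariant transverse probability $\Lambda$ of modulus $\delta(x,y)=e^{V(y)-V(x)}$ to the probability $M=\Psi(\Lambda)$ determined by
\[ \int F(y)\,dM(y) := \Lambda\big(F(x)\hat{\nu}^y(dx)\big), \quad F \text{ measurable and bounded}. \]

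First I would record that $\Phi$ lands in $\mathcal{M}(\hat{\nu})$: this is exactly Proposition \ref{istransm}, which shows that $\Phi(M)$ is linear, satisfies $\Phi(M)(\hat{\nu})=1$, and verifies the defining cocycle identity of Definition \ref{coco} for the modulus $\delta(x,y)=e^{V(y)-V(x)}$. Dually, $\Psi$ is well defined and lands among Haar-invariant probabilities with Jacobian $e^V$ by Proposition \ref{return}, where it is also shown that $\Psi(\Lambda)$ is genuinely a probability (using linearity of $\Lambda$ together with $\Lambda(\hat{\nu})=1$) and that $\Lambda$ can be recovered from $M=\Psi(\Lambda)$ through the identity $\Lambda(\nu)=\iint e^{V(x)}\nu^y(dx)\,dM(y)$.

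It then remains to check the two compositions. For $\Psi\circ\Phi=\mathrm{id}$, given $M$ with associated $\Lambda=\Phi(M)$, I would apply Proposition \ref{density} to the transverse function $\nu^y(dx)=F(x)\hat{\nu}^y(dx)$, which gives $\Lambda(F(x)\hat{\nu}^y(dx))=\int F\,dM$; since the left-hand side is by definition $\int F\,d(\Psi(\Lambda))$, and this holds for every bounded measurable $F$, the two probabilities coincide. For $\Phi\circ\Psi=\mathrm{id}$, given $\Lambda$ with associated $M=\Psi(\Lambda)$, the final displayed identity of Proposition \ref{return} states precisely that $\Lambda(\nu)=\iint e^{V(x)}\nu^y(dx)\,dM(y)=\Phi(M)(\nu)$ for every transverse function $\nu$, so $\Phi(M)=\Lambda$. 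These two facts show $\Phi$ and $\Psi$ are mutually inverse, which is the assertion of the theorem.

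I do not expect a genuine obstacle here, since all the analytic content has been dispatched in Propositions \ref{istransm}, \ref{density}, and \ref{return}; the only point requiring a moment of care is that the defining relation for $\Psi(\Lambda)$ specifies a probability uniquely (a bounded measurable $F$ is tested against $M$ through $\Lambda$, and linearity plus $\Lambda(\hat{\nu})=1$ guarantee total mass one and positivity), and symmetrically that $\Phi(M)$ is determined on all of $\mathcal{E}^+$ and not merely on densities of $\hat{\nu}$. Both are already addressed in the cited propositions, so the proof is essentially the statement that these maps compose to the identity in each order.
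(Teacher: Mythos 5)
Your proposal is correct and follows exactly the route the paper takes: the theorem is stated as a summary of Propositions \ref{istransm}, \ref{density}, and \ref{return}, and your assembly of the forward and backward maps together with the two composition checks ($\Psi\circ\Phi=\mathrm{id}$ via Proposition \ref{density}, $\Phi\circ\Psi=\mathrm{id}$ via the final identity of Proposition \ref{return}) is precisely the intended argument. In fact your write-up is slightly more explicit than the paper, which omits the composition verification entirely.
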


\subsection{Entropy of transverse measures} \label{ent}

On this section remains fixed a Haar-system $(G,\hat{\nu})$ where the transverse function $\hat{\nu}$ satisfies $\int 1\,\hat{\nu}^y (dx) =1, \,\,\forall y$. We use the notations and  hypotheses of Theorem $\ref{equivalent}$. Remember that we denote by $\mathcal{M} (\hat{\nu})$  the set of  Haar-invariant transverse  probabilities $\Lambda$ for the Haar system $(G,\hat{\nu})$. In some sense (see Theorem \ref{equivalent}) the set $\mathcal{M} (\hat{\nu})$ corresponds in Thermodynamic Formalism (Ergodic Theory) to the set of invariant probabilities.

We will be able to extend some concepts in Ergodic Theory concerning entropy to the Haar system formalism. The transverse function $\hat{\nu}$ will play the role of the a priori probability in \cite{LMMS} (where one can found the motivation for the definition below).

Our concept  of invariant probability does not necessarily coincide with the one in classical measurable dynamics.

\begin{definition} We define the entropy of a Haar-invariant transverse probability $\Lambda$ relative to $\hat{\nu}$ (or relative to $(G,\hat{\nu})$)  as
\[ h_{\hat{\nu}}(\Lambda) = -\sup \{ \Lambda(F(x)\hat{\nu}^{y}(dx))\,|\, F \,\,\text{is Haar-normalized} \}.\]
\end{definition}
If $\Lambda$ has modulus $\delta(x,y)=e^{V(y)-V(x)}$, where $V$ is Haar-normalized, and $M$ is the corresponding Haar-invariant probability given in Theorem \ref{equivalent}, then we get
\[ h_{\hat{\nu}}(\Lambda) = -\sup \{ \int F(x)\,dM(x)\,|\,  F \,\,\text{is Haar-normalized}\}.\]
As we define entropy just for transverse measures in the set $\mathcal{M} (\hat{\nu})$, it follows from Theorem \ref{equivalent} that we are defining similarly  entropy for any Haar-invariant probabilities.

\medskip

\begin{theorem}\label{entropyjacobian} Suppose $\Lambda \in \cal M(\hat{\nu})$ has modulus $\delta(x,y)=e^{V(y)-V(x)}$, where $V$ is a Haar-normalized function. Then,
\begin{equation} \label{realent} h_{\hat{\nu}}(\Lambda) = -\int V(x)\,dM(x) = -\Lambda(V\hat{\nu}), \end{equation}
where $M$ is defined in Theorem \ref{equivalent}.
\end{theorem}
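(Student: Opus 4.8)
The plan is to work with the already-simplified form of the entropy: since $\Lambda$ has modulus $e^{V(y)-V(x)}$ and $M$ is its associated Haar-invariant probability, the remark following the definition of entropy gives
\[ h_{\hat{\nu}}(\Lambda) = -\sup\left\{ \int F(x)\,dM(x) \,\middle|\, F \text{ is Haar-normalized} \right\}. \]
So everything reduces to computing this supremum over the family of Haar-normalized functions, and the goal becomes to show that it is attained precisely at $F=V$ with value $\int V\,dM$.

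First I would record the two facts I need about $M$ and $V$. Since $V$ is Haar-normalized, for every $y$ the measure $d\mu_y(x) := e^{V(x)}\hat{\nu}^y(dx)$ is a probability on the class $[y]$, because its total mass is $\int e^{V(x)}\hat{\nu}^y(dx)=1$. Since $M$ is Haar-invariant with Jacobian $e^V$, equation (\ref{invariant}) (equivalently Proposition \ref{fixdual}) yields, for any bounded measurable $g$,
\[ \int g(x)\,dM(x) = \iint g(x)e^{V(x)}\hat{\nu}^y(dx)\,dM(y) = \iint g\,d\mu_y\,dM(y). \]

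The core estimate is then a Jensen (relative-entropy) argument carried out fiberwise. Fixing a Haar-normalized $F$ and applying the previous identity to $g=F-V$ gives $\int (F-V)\,dM = \iint (F-V)\,d\mu_y\,dM(y)$. For each fixed $y$, concavity of the logarithm yields
\[ \int (F-V)\,d\mu_y = \int \log\!\left(e^{F-V}\right) d\mu_y \le \log\!\left(\int e^{F-V}\,d\mu_y\right), \]
and since $\int e^{F-V}\,d\mu_y = \int e^{F(x)}\hat{\nu}^y(dx) = 1$ (this is exactly where $F$ being Haar-normalized is used), the right-hand side is $\log 1 = 0$. Integrating over $y$ gives $\int F\,dM \le \int V\,dM$ for every Haar-normalized $F$. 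Because $V$ is itself Haar-normalized the bound is sharp, so the supremum equals $\int V\,dM$ and hence $h_{\hat{\nu}}(\Lambda) = -\int V\,dM$.

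Finally, to obtain the second equality I would invoke Proposition \ref{density} with the (signed) transverse function $\nu^y(dx)=V(x)\hat{\nu}^y(dx)$, which gives $\Lambda(V\hat{\nu}) = \int V(x)\,dM(x)$ and closes the chain. The only delicate point is the fiberwise Jensen step: one must observe that $e^{V}\hat{\nu}^y$ is the correct probability measure against which to apply concavity, so that the normalization of $F$ turns the bound into exactly $0$. This is the Haar-system analogue of the statement that relative entropy (Kullback--Leibler divergence) is nonnegative and vanishes on the diagonal, and it is precisely what forces $V$ to be the maximizer.
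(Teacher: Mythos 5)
Your proof is correct and takes essentially the same route as the paper: the paper also reduces the first equality to the inequality $\int U\,dM \le \int V\,dM$ for every Haar-normalized $U$, proved by the fiberwise Jensen inequality against the probability $e^{V(x)}\hat{\nu}^{y}(dx)$ (written there as $\log H_V(u) \ge H_V(\log u)$ with $u=e^{U-V}$) combined with the fixed-point identity $H_V^{*}(M)=M$. The second equality is likewise read off from the explicit formula for $\Lambda$ in terms of $M$, so your argument coincides with the paper's up to notation.
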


\begin{proof} (This proof follows ideas from \cite{LMMS})  By construction $M$ is Haar-invariant with Jacobian $e^{V}$. The second equality on expression (\ref{realent}) is a consequence of Theorem \ref{equivalent}. In order to prove the first equality  we consider a general Haar-normalized function $U$, and then, we claim  that
\begin{equation}\label{jacobian}
 \int U(x)\,dM(x) \leq \int V(x)\,dM(x) .
\end{equation}	
From this inequality we obtain
 \[\sup \{\int F(x)\,dM(x)\,|\, F\, \text{is Haar-normalized} \}	=  \int V(x)\,dM(x),\]
which proves that
\[h_{\hat{\nu}}(\Lambda) = -\int V(x)\,dM(x).\]
In order to prove (\ref{jacobian}) we consider again the operator
\[H_V(f)(y) = \int e^{V(x)}f(x)\,d\hat{\nu}^{y}(dx).  \]
The probability $M$ satisfies $H_V^*(M)=M$  according to Proposition \ref{fixdual}.

Let $u=e^{U-V}$. Then, $u\,e^{V-U}= 1$, and moreover $H_V(u)(y)=1 $, for any $y$. It follows that
\[ 0 = \log(1/1) = \log\left( \frac{H_V(u)}{ue^{V-U}}\right) = \log({H_V(u)}) - \log(u) +U-V,  \]
and
\[ 0 = \int \log({H_V(u)})\,dM - \int \log(u)\,dM +\int U\,dM -\int V \,dM.   \]
Therefore,
\[ \int V\,dM - \int U\,dM = \int \log({H_V(u)})\,dM - \int \log(u)\,dM \]
\[   = \int \log({H_V(u)})\,dM - \int H_V(\log(u))\,dM \geq 0, \]
because for any $y$ we can consider the probability $e^{V(x)}\hat{\nu}^{y}(dx)$ and apply the Jensen's inequality, in the following way,
\small
\[\log({H_V(u)}) = \log \left(\int u(x)e^{V(x)}\hat{\nu}^{y}(dx)\right)  \geq \int \log(u)(x)e^{V(x)}\hat{\nu}^{y}(dx) =  H_V(\log(u)) .\] \normalsize
\end{proof}

\begin{proposition} The entropy above defined has the following properties:\newline
1. $h_{\hat{\nu}}(\Lambda)\leq 0$	for any $\Lambda \in \mathcal{M}(\hat{\nu})$\newline
2. $h_{\hat{\nu}}(\cdot) $ is concave\newline
3. $h_{\hat{\nu}}(\cdot)$ is upper semi continuous. More precisely, if $\Lambda_n(\nu)\to \Lambda(\nu)$, for any transverse function $\nu$, then, $\displaystyle{\limsup_n h_{\hat{\nu}}(\Lambda_n) \leq h_{\hat{\nu}}(\Lambda).}$
\end{proposition}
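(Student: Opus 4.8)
The plan is to reduce all three statements to the variational expression $h_{\hat{\nu}}(\Lambda) = -\sup\{\Lambda(F\hat{\nu})\,:\,F\text{ Haar-normalized}\}$ together with the single structural observation that, for each fixed Haar-normalized $F$, the assignment $\Lambda \mapsto \Lambda(F\hat{\nu})$ is \emph{linear} in $\Lambda$. First I would note that $F\hat{\nu}$ is a legitimate element of $\mathcal{E}$: writing $F = F^+ - F^-$, each of $F^{\pm}\hat{\nu}$, defined by $(F^{\pm}\hat{\nu})^y(dx) = F^{\pm}(x)\hat{\nu}^y(dx)$, is a genuine transverse function, since $\hat{\nu}^x = \hat{\nu}^y$ whenever $x \sim y$ forces $(F^{\pm}\hat{\nu})^x = (F^{\pm}\hat{\nu})^y$. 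Hence $\Lambda(F\hat{\nu})$ is well defined by linear extension of $\Lambda$ to $\mathcal{E}$. For part 1, I would exhibit $F \equiv 0$ as a competitor in the supremum: it is Haar-normalized because $\int e^0\,\hat{\nu}^y(dx) = \int 1\,\hat{\nu}^y(dx) = 1$, and $\Lambda(0\cdot\hat{\nu}) = 0$ by linearity. Thus $\sup_F \Lambda(F\hat{\nu}) \geq 0$, and negating gives $h_{\hat{\nu}}(\Lambda) \leq 0$.

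For part 3 (upper semicontinuity) the point is that $\Lambda \mapsto \sup_F\Lambda(F\hat{\nu})$ is lower semicontinuous along the given convergence. Fixing a Haar-normalized $F$, the hypothesis $\Lambda_n(\nu) \to \Lambda(\nu)$ for transverse functions $\nu$, combined with the decomposition $F\hat{\nu} = F^+\hat{\nu} - F^-\hat{\nu}$ and linearity, yields $\Lambda_n(F\hat{\nu}) \to \Lambda(F\hat{\nu})$. Since $\Lambda_n(F\hat{\nu}) \leq \sup_G\Lambda_n(G\hat{\nu})$, passing to the limit gives $\Lambda(F\hat{\nu}) \leq \liminf_n\sup_G\Lambda_n(G\hat{\nu})$; taking the supremum over $F$ gives $\sup_F\Lambda(F\hat{\nu}) \leq \liminf_n\sup_G\Lambda_n(G\hat{\nu})$, which after negation is precisely $\limsup_n h_{\hat{\nu}}(\Lambda_n) \leq h_{\hat{\nu}}(\Lambda)$.

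For part 2 (concavity) the mechanism is that $\Lambda \mapsto \sup_F\Lambda(F\hat{\nu})$ is a supremum of affine functionals of $\Lambda$, hence convex, so its negative $h_{\hat{\nu}}$ is concave: for $t\in[0,1]$, $\Lambda = t\Lambda_1+(1-t)\Lambda_2$, and any Haar-normalized $F$, one has $\Lambda(F\hat{\nu}) = t\Lambda_1(F\hat{\nu}) + (1-t)\Lambda_2(F\hat{\nu}) \leq t\sup_G\Lambda_1(G\hat{\nu}) + (1-t)\sup_G\Lambda_2(G\hat{\nu})$; taking $\sup_F$ and negating produces the concavity inequality.

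The step I expect to be the genuine obstacle is not this inequality but verifying that $\mathcal{M}(\hat{\nu})$ is convex, so that $h_{\hat{\nu}}(\Lambda)$ is even defined at $\Lambda = t\Lambda_1+(1-t)\Lambda_2$. I would handle this through Theorem \ref{equivalent}: $\Lambda_i$ corresponds to a Haar-invariant probability $M_i$ with Jacobian $e^{V_i}$, the convex combination corresponds to $M = tM_1+(1-t)M_2$, and I would disintegrate $M$ along the classes (via Rokhlin's theorem) to show its conditional measures have the form $e^{V(x)}\hat{\nu}^y(dx)$ for a bounded Haar-normalized $V$ given on each class by the class-constant convex combination $e^{V(x)} = t\rho_1\,e^{V_1(x)} + (1-t)\rho_2\,e^{V_2(x)}$, where $\rho_1,\rho_2$ are the Radon--Nikodym densities of the pushed-forward measures on the quotient with respect to that of $M$. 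Haar-normalization of $V$ follows because these weights satisfy $t\rho_1+(1-t)\rho_2 = 1$ on each class, and boundedness of $V$ from the bounds $\min(e^{V_1},e^{V_2}) \leq e^{V} \leq \max(e^{V_1},e^{V_2})$; this returns $\Lambda$ to $\mathcal{M}(\hat{\nu})$ and makes the concavity inequality meaningful.
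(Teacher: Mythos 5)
Your proof is correct, and for items 1 and 2 it is essentially the paper's own argument: $F\equiv 0$ is Haar-normalized and gives $h_{\hat{\nu}}\leq 0$, and $-h_{\hat{\nu}}$ is a pointwise supremum of functionals that are affine in $\Lambda$, hence convex. For item 3 the mechanism is the same (an infimum of continuous affine functionals is upper semicontinuous), but the paper takes a shortcut you do not: by Theorem \ref{entropyjacobian} the supremum defining $-h_{\hat{\nu}}(\Lambda)$ is \emph{attained} at the modulus function $V$ of $\Lambda$, so the paper simply tests $\Lambda_n$ against this single witness, $h_{\hat{\nu}}(\Lambda_n)\leq \Lambda_n(-V\hat{\nu})\to \Lambda(-V\hat{\nu})=h_{\hat{\nu}}(\Lambda)$. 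Your version avoids invoking attainment, at the cost of the extra $\liminf$ bookkeeping; both need the small lemma you state and the paper leaves implicit, namely that $\Lambda_n(F\hat{\nu})\to\Lambda(F\hat{\nu})$ for signed $F$ via the splitting $F=F^{+}-F^{-}$. The genuinely new content in your write-up is the last paragraph: the paper asserts concavity for $a_1\Lambda_1+a_2\Lambda_2$ without checking that this combination lies in $\mathcal{M}(\hat{\nu})$, i.e.\ that it admits a Haar-normalized modulus, and your construction $e^{V}=t\rho_1 e^{V_1}+(1-t)\rho_2 e^{V_2}$ with $\rho_i$ the class-constant densities of $\hat{M_i}$ with respect to $\hat{M}$ does close this gap (the weights sum to $1$ on each class, so $V$ is Haar-normalized and bounded). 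One streamlining: you do not need the full Rokhlin disintegration here; since $\rho_i(x)=\rho_i(y)$ for $x\sim y$ and $H_{V_i}(f)$ is constant on classes, a direct computation gives $\int H_V(f)\,dM=t\int H_{V_1}(f)\,dM_1+(1-t)\int H_{V_2}(f)\,dM_2=\int f\,dM$, so $H_V^{*}(M)=M$, and Propositions \ref{fixdual} and \ref{istransm} then return $t\Lambda_1+(1-t)\Lambda_2$ to $\mathcal{M}(\hat{\nu})$.
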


\begin{proof} -\newline
Proof of 1.: Just take $V=0$ which is Haar-normalized.\newline
Proof of 2.: Suppose $\Lambda = a_1\Lambda_1 + a_2 \Lambda_2$, where $\Lambda_1$ and $\Lambda_2$ are Haar-invariant transverse probabilities, $a_1,a_2\geq 0$ and $a_1+a_2= 1$. Then,
 \[ 	 h_{\hat{\nu}}(\Lambda) = -\sup \{\Lambda(F\hat{\nu})\,|\, F \,\text{normalized} \, \}\]
 \[       = \inf  \{\Lambda(-F\hat{\nu})\,|\, F \,\text{normalized} \, \}\]
 \[      = \inf  \{a_1\Lambda_1(-F\hat{\nu})+a_2\Lambda_2(-F\hat{\nu})\,|\, F \,\text{normalized} \, \}\]
\[      \geq  a_1\inf  \{\Lambda_1(-F\hat{\nu})\,|\, F \,\text{normalized} \, \}+a_2\inf\{\Lambda_2(-F\hat{\nu})\,|\, F \,\text{normalized} \, \}\]
\[= a_1h_{\hat{\nu}}(\Lambda_1)+a_2h_{\hat{\nu}}(\Lambda_1). \]	
Proof of 3.: Let $V$ be a Haar-normalized function, such that, $\Lambda$ has modulus $e^{V(y)-V(x)}$. Given any $\epsilon>0$, we have that
$\Lambda_n(-V\hat{\nu}) \leq \Lambda(-V\hat{\nu}) +\epsilon =h_{\hat{\nu}}(\Lambda)+\epsilon$, for sufficiently large $n$. Then, for sufficiently large $n$ we get
\[h_{\hat{\nu}}(\Lambda_n) = \inf \{\Lambda_n(-F\hat{\nu})\,|\, F \,\text{normalized} \, \} \leq \Lambda_n(-V\hat{\nu}) \leq  h_{\hat{\nu}}(\Lambda)+\epsilon. \]

\end{proof}

\bigskip

\subsection{Pressure of transverse functions}

In this section remains fixed a Haar system $(G,\hat{\nu})$ where the transverse function $\hat{\nu}$ satisfies $\int \hat{\nu}^y (dr) =1, \,\,\forall y$.
\medskip

\begin{definition} \label{prepre} We define the {\bf $\hat{\nu}$-Pressure of the transverse function $\nu$ } by
\begin{equation} \label{preo} P_{\hat{\nu}}(\nu) = \sup_{\Lambda\, \,\in \mathcal{M} (\hat{\nu})} \,\{\,\Lambda(\nu) +  h_{\hat{\nu}}(\Lambda)\,\}.\end{equation}
A transverse measure $\Lambda \in \mathcal{M}(\hat{\nu})$ which attains the supremum on the above expression will be called an {\bf equilibrium transverse measure} for the {\bf transverse function $\nu$}.
\end{definition}

\bigskip

 In the following we use the notation $M_V$ to denote a Haar-invariant probability with Jacobian $e^{V}$. Remember from Theorem \ref{Mandmu} that there are several of such associated probabilities.

\bigskip

 It follows from theorems \ref{equivalent} and \ref{entropyjacobian} that
 \[ P_{\hat{\nu}}(\nu) = \sup_{V \,\hat{\nu}-normalized}\,\,\,\,\,\sup_{M_V} \left[\int e^{V(x)}\nu^{y}(dx)dM_V(y)-  \int V(y)dM_V(y) \right] , \]
which, from proposition \ref{fixdual}, can be rewritten as \footnotesize
\[ P_{\hat{\nu}}(\nu) = \sup_{V \,\hat{\nu}-normalized}\,\,\,\,\,\sup_{M_V}\left[ \int e^{V(x)}\nu^{y}(dx)dM_V(y)-  \int e^{V(x)}V(x)\hat{\nu}^y(dx)dM_V(y)\right].  \] \normalsize

\medskip

The cases where $\nu$ is of the form $\nu = U(x)\,\hat{\nu}^{y}(dx)$ are studied below. In these particular cases is natural to interpret $\nu$ as the function $U:\Omega \to \mathbb{R}$ and, using Theorem \ref{equivalent}, to interpret $\Lambda$ as the associated probability $M$ on $\Omega$.

\begin{proposition}\label{pressurezero} Suppose that $U$ is $\hat{\nu}$-normalized. Consider the transverse function $\nu = U(x)\,\hat{\nu}^{y}(dx)$. Then, $P_{\hat{\nu}}(\nu) = 0$. If $M_U$ is any Haar-invariant probability with Jacobian $e^{U}$ and $\Lambda_U$ is the associated transverse measure from Theorem \ref{equivalent}, then $\Lambda_U$ is an equilibrium for $\nu$.
\end{proposition}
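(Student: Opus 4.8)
The plan is to evaluate the quantity $\Lambda(\nu) + h_{\hat{\nu}}(\Lambda)$ for an arbitrary $\Lambda \in \mathcal{M}(\hat{\nu})$ and to show that it is always $\le 0$, with equality realized by $\Lambda_U$. First I would fix any $\Lambda \in \mathcal{M}(\hat{\nu})$; by definition it has modulus $\delta(x,y) = e^{W(y)-W(x)}$ for some Haar-normalized $W$, and by Theorem \ref{equivalent} it is associated to a unique Haar-invariant probability $M$ with Jacobian $e^{W}$, with $\Lambda$ given by expression (\ref{eq2}). Since $U$ is bounded and measurable, the object $\nu^{y}(dx) = U(x)\hat{\nu}^{y}(dx)$ is a (signed) transverse function in $\mathcal{E}$, and $\Lambda$ extends linearly to $\mathcal{E}$, so $\Lambda(\nu)$ is well defined. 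Applying Proposition \ref{density} (valid for bounded measurable densities) gives $\Lambda(\nu) = \int U(x)\,dM(x)$, while Theorem \ref{entropyjacobian} gives $h_{\hat{\nu}}(\Lambda) = -\int W(x)\,dM(x)$.

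Combining these two identities yields
\[ \Lambda(\nu) + h_{\hat{\nu}}(\Lambda) = \int \big(U(x) - W(x)\big)\,dM(x). \]
Now the crucial step is the inequality already established inside the proof of Theorem \ref{entropyjacobian}: since $M$ is Haar-invariant with Jacobian $e^{W}$ and $U$ is itself Haar-normalized, inequality (\ref{jacobian}) (with the potential denoted $V$ there now playing the role of $W$) says precisely that $\int U\,dM \le \int W\,dM$. Hence $\int(U-W)\,dM \le 0$, so $\Lambda(\nu) + h_{\hat{\nu}}(\Lambda) \le 0$ for every $\Lambda \in \mathcal{M}(\hat{\nu})$. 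Taking the supremum over $\mathcal{M}(\hat{\nu})$ in Definition \ref{prepre} then gives $P_{\hat{\nu}}(\nu) \le 0$.

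For the reverse inequality and the equilibrium assertion I would test the supremum against $\Lambda_U$ itself. By construction $\Lambda_U$ is associated to a Haar-invariant probability $M_U$ with Jacobian $e^{U}$, so its modulus function is $W = U$. Substituting into the displayed identity gives $\Lambda_U(\nu) + h_{\hat{\nu}}(\Lambda_U) = \int (U - U)\,dM_U = 0$. Therefore the supremum is attained and equals $0$, that is $P_{\hat{\nu}}(\nu) = 0$ and $\Lambda_U$ is an equilibrium transverse measure for $\nu$. I do not expect a genuine obstacle: the whole argument is a repackaging of the variational inequality (\ref{jacobian}), and the only points deserving a little care are verifying that $\nu = U(x)\hat{\nu}^{y}(dx)$ is admissible as a signed transverse function and that Proposition \ref{density} and Theorem \ref{entropyjacobian} are being applied with the modulus function $W$ of the generic $\Lambda$ rather than with a single fixed potential.
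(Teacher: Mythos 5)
Your proposal is correct and follows essentially the same route as the paper: both reduce the variational expression to $\int (U-W)\,dM \le 0$ via inequality (\ref{jacobian}) (the paper does this through its displayed reformulation of $P_{\hat{\nu}}$ as a double supremum over normalized $V$ and $M_V$, you do it by applying Proposition \ref{density} and Theorem \ref{entropyjacobian} directly to a generic $\Lambda$), and both attain equality by taking the modulus potential equal to $U$. Your explicit remark that $\nu = U(x)\hat{\nu}^{y}(dx)$ must be handled as a signed transverse function in $\mathcal{E}$ is a point of care the paper leaves implicit.
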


\begin{proof}\footnotesize
	\[ P_{\hat{\nu}}(\nu) = \sup_{V \,\hat{\nu}-normalized}\sup_{M_V}\left[ \int e^{V(x)}U(x)\hat{\nu}^{y}(dx)dM_V(y)-  \int e^{V(x)}V(x)\hat{\nu}^y(dx)dM_V(y)\right]  \]
	\[\stackrel{H_V^*(M_V)=M_V}{=} \sup_{V \,\hat{\nu}-normalized}\sup_{M_V}\left[ \int U(y)dM_V(y)-  \int V(y)dM_V(y)\right].  \] \normalsize
From (\ref{jacobian}) the last expression is smaller then zero and, on the other hand, taking $V=U$, as a particular $V$ under the supremum,  the last expression is equal to zero. Then, the choices $V=U$ and any probability $M_U$ attain the supremum.
\end{proof}

For the next result we suggest  the reader to recall (in advance) the claim of Proposition \ref{eigenvalue} before reading it.

\medskip

\begin{proposition} Consider the transverse function $\nu = U(x)\hat{\nu}^{y}(dx)$, where $U$ is measurable and bounded, but not necessarily $\hat{\nu}$-normalized. Suppose that $\tilde U(y) = \int e^{U(x)}\, \hat{\nu}^y(dx)$ is a constant function, $\tilde U(y) = \lambda$ for all $y$.
Then, \[P_{\hat{\nu}}(\nu) = \log(\lambda).  \]
\end{proposition}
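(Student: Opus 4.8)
The plan is to reduce the statement to the already-established normalized case, Proposition \ref{pressurezero}, by absorbing the failure of normalization into an additive constant. Since $\tilde U(y)=\int e^{U(x)}\hat{\nu}^y(dx)=\lambda$ is constant, item 2 of Proposition \ref{eigenvalue} guarantees that $V:=U-\log(\tilde U)=U-\log\lambda$ is Haar-normalized. Thus $U=V+\log\lambda$ with $V$ normalized, and the constant $\log\lambda$ is exactly the shift we expect to surface in the pressure.

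Next I would expand the definition of $P_{\hat{\nu}}(\nu)$ precisely as in the proof of Proposition \ref{pressurezero}. Combining Theorems \ref{equivalent} and \ref{entropyjacobian} with Proposition \ref{fixdual}, the pressure of $\nu=U(x)\hat{\nu}^y(dx)$ can be written as
\[ P_{\hat{\nu}}(\nu)=\sup_{W\ \hat{\nu}\text{-normalized}}\ \sup_{M_W}\left[\int e^{W(x)}U(x)\hat{\nu}^y(dx)\,dM_W(y)-\int e^{W(x)}W(x)\hat{\nu}^y(dx)\,dM_W(y)\right]. \]
For each normalized $W$ we have $H_W^{*}(M_W)=M_W$, so the identity $\int H_W(f)\,dM_W=\int f\,dM_W$ collapses both inner integrals and gives
\[ P_{\hat{\nu}}(\nu)=\sup_{W}\ \sup_{M_W}\left[\int U\,dM_W-\int W\,dM_W\right]. \]
Substituting $U=V+\log\lambda$ and pulling the constant out of the supremum yields $P_{\hat{\nu}}(\nu)=\log\lambda+\sup_{W}\sup_{M_W}\left[\int V\,dM_W-\int W\,dM_W\right]$.

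It then remains to show that the residual supremum vanishes. For the lower bound I would take $W=V$ (a legitimate normalized choice) together with any $M_V$, which gives $\int V\,dM_V-\int V\,dM_V=0$. For the upper bound I would invoke inequality (\ref{jacobian}): applied to the Haar-normalized function $V$ and to the probability $M_W$ (which is Haar-invariant with Jacobian $e^{W}$), it yields $\int V\,dM_W\le\int W\,dM_W$, so every bracket is $\le 0$. Hence the residual supremum equals $0$ and $P_{\hat{\nu}}(\nu)=\log\lambda$. The only genuine content here is recognizing that constancy of $\tilde U$ is exactly what makes $U-\log\lambda$ normalized, so the problem becomes Proposition \ref{pressurezero} shifted by the scalar $\log\lambda$; the remaining estimates are the same Jensen-type inequality already packaged in (\ref{jacobian}), and I expect no real obstacle beyond the bookkeeping of the normalization.
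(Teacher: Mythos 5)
Your proof is correct and follows essentially the same route as the paper: reduce to the normalized function $U-\log\lambda$ via Proposition \ref{eigenvalue}, collapse the integrals using $H_W^*(M_W)=M_W$, and settle the residual supremum by inequality (\ref{jacobian}) for the upper bound and the choice $W=U-\log\lambda$ for the lower bound. The only (cosmetic) difference is that you rename the dummy normalized potential to $W$, which avoids the notational clash the paper tolerates.
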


\begin{proof}
	From Proposition \ref{eigenvalue} the function $U-\log(\lambda)$ is normalized. Then,
\footnotesize	\[P_{\hat{\nu}}(\nu) = \sup_{V \,\hat{\nu}-normalized}\sup_{M_V}\left[ \int e^{V(x)}U(x)\hat{\nu}^{y}(dx)dM_V(y)-  \int e^{V(x)}V(x)\hat{\nu}^y(dx)dM_V(y)\right]  \] \normalsize
	\[=  \sup_{V \,\hat{\nu}-normalized}\sup_{M_V}\left[\int U(y)\,dM_V(y)-  \int V(y) \,dM_V(y)\right]  \]
	\[= \sup_{V \,\hat{\nu}-normalized}\sup_{M_V}\left[ \int [U-\log(\lambda)dM_V-  \int VdM_V + \log(\lambda) \right]   = \log(\lambda), \]
	where the last equality is a consequence of (\ref{jacobian}) together with the fact that we can take also $V=U-\log(\lambda)$ under the supremum.
	\end{proof}

Now we consider the general case where $\tilde U$ is not constant.
\medskip

\begin{proposition}\label{pressuretilde} Consider the transverse function $\nu = U(x)\hat{\nu}^{y}(dx)$ where $U$ is bounded and measurable, but not necessarily normalized. Let $\tilde U(y) = \int e^{U(x)}\, \hat{\nu}^y(dx)$.
Then, \[P_{\hat{\nu}}(\nu) = \sup_{y\in \Omega}\, [\log (\tilde U (y))] . \]

\end{proposition}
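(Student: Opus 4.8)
The plan is to start from the reduced pressure formula derived just before this proposition. For the special transverse function $\nu = U(x)\hat{\nu}^{y}(dx)$ we have $\int e^{V(x)}\nu^{y}(dx) = \int e^{V(x)}U(x)\hat{\nu}^{y}(dx)$, so that formula reads
\[ P_{\hat{\nu}}(\nu) = \sup_{V\,\hat{\nu}-normalized}\;\sup_{M_V}\left[\int e^{V(x)}U(x)\hat{\nu}^{y}(dx)dM_V(y) - \int e^{V(x)}V(x)\hat{\nu}^y(dx)dM_V(y)\right]. \]
Using $H_V^{*}(M_V)=M_V$ from Proposition \ref{fixdual} to collapse both integrals against the transverse function, this becomes
\[ P_{\hat{\nu}}(\nu) = \sup_{V\,\hat{\nu}-normalized}\;\sup_{M_V}\int (U - V)\,dM_V. \]
The central observation I would use is the decomposition $U = U_0 + \log\tilde U$, where $U_0 := U - \log\tilde U$ is Haar-normalized by item 2 of Proposition \ref{eigenvalue}, while $\log\tilde U$ is constant on classes (because $\hat{\nu}^{y}$ depends only on the class of $y$) and bounded (since $U$ is bounded, $\tilde U$ is bounded away from $0$ and $\infty$, hence $\log\tilde U$ is measurable and bounded).

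For the upper bound I would fix an arbitrary Haar-normalized $V$ and an arbitrary $M_V$ and split the integrand along this decomposition:
\[ \int (U-V)\,dM_V = \int (U_0 - V)\,dM_V + \int \log\tilde U\,dM_V. \]
Since $U_0$ is Haar-normalized and $M_V$ has Jacobian $e^{V}$, the variational inequality (\ref{jacobian}) gives $\int U_0\,dM_V \le \int V\,dM_V$, so the first term is $\le 0$; the second term is $\le \sup_{y\in\Omega}\log\tilde U(y)$ because $M_V$ is a probability. Taking the double supremum over $V$ and $M_V$ yields $P_{\hat{\nu}}(\nu)\le \sup_{y\in\Omega}\log\tilde U(y)$.

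For the matching lower bound I would make the single admissible choice $V = U_0 = U - \log\tilde U$ in the outer supremum. Then $U - V = \log\tilde U$ is constant on classes, so $\int(U-V)\,dM_V = \int \log\tilde U\,dM_V$, and Corollary \ref{sup}, applied to the bounded function $g=\log\tilde U$ (constant on classes) and to this particular normalized $V$, gives $\sup_{M_V}\int \log\tilde U\,dM_V = \sup_{y\in\Omega}\log\tilde U(y)$. Hence $P_{\hat{\nu}}(\nu)\ge \sup_{y\in\Omega}\log\tilde U(y)$, and combining the two inequalities completes the proof.

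I expect no hard analytic estimate here; the work is almost entirely organizational. The step I would treat most carefully is the verification that $\log\tilde U$ is measurable, bounded, and constant on classes, since this is exactly what licenses both the use of (\ref{jacobian}) on the normalized part $U_0$ and the use of Corollary \ref{sup} on the class-constant part $\log\tilde U$. Recognizing that $U = U_0 + \log\tilde U$ is the splitting that reduces the general (non-normalized) case to the two already-solved special cases is the only genuinely non-routine point.
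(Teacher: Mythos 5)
Your proof is correct and follows essentially the same route as the paper's: both rest on the decomposition $U=(U-\log\tilde U)+\log\tilde U$, the inequality (\ref{jacobian}) applied to the normalized part, the choice $V=U-\log\tilde U$ for the lower bound, and Corollary \ref{sup} to identify the supremum over Haar-invariant probabilities of $\int\log\tilde U\,dM$ with $\sup_{y}\log\tilde U(y)$. The only cosmetic difference is that your upper bound reaches $\sup_y\log\tilde U(y)$ directly from the fact that $M_V$ is a probability, whereas the paper passes through $\sup_M\int\log\tilde U\,dM$ and then applies Corollary \ref{sup} to both bounds at once.
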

\begin{proof}
First note that
	\[P_{\hat{\nu}}(\nu) =  \sup_{V \,\hat{\nu}-normalized}\sup_{M_V}\left[\int U(y)\,dM_V(y)-  \int V(y) \,dM_V(y)\right]  \]
	\[= \sup_{V \,\hat{\nu}-normalized}\sup_{M_V}\left[ \int [U-\log(\tilde U)dM_V-  \int VdM_V + \int \log(\tilde U)dM_V \right] . \]
On the one hand, from (\ref{jacobian}), as  $U-\log(\tilde U)$ is normalized we get $$P_{\hat{\nu}}(\nu) \leq \sup_{V \,\hat{\nu}-normalized}\sup_{M_V}\left[ \int \log(\tilde U)dM_V \right] = \sup_{M \, Haar-invariant} \int \log(\tilde U)dM .$$
On the other hand, choosing $V=U-\log(\tilde U)$ we get
$$P_{\hat{\nu}}(\nu) \geq \sup_{M_{U-\log \tilde U}}  \int \log(\tilde U)dM_{U-\log \tilde U}.$$

As $\tilde U$ is constant on classes  then from Corollary \ref{sup} we conclude the proof.
\end{proof}

\begin{remark} If there exists $y_0 \in \Omega$ satisfying  \,\,$\sup_{y\in \Omega}\, [\log (\tilde U (y))] = \log(\tilde U(y_0))$, then, taking $\mu=\delta_{y_0}$ and applying Theorem \ref{Mandmu}, there exists a Haar-invariant probability $M_{U-\log(\tilde U)}$ satisfying
\[ P_{\hat{\nu}}(\nu)= \int \log(\tilde U)\,d\delta_{y_0} = \int \log(\tilde U)\,dM_{U-\log(\tilde U)}. \]
In this case, if $\Lambda$ is the transverse measure associated to $M_{U-\log(\tilde U)}$ by Theorem \ref{equivalent}, then $\Lambda$ is an equilibrium for $\nu$.
\end{remark}

\bigskip

 We observe that $P_{\hat{\nu}}(\cdot)$ plays the role of the Legendre's transform of $-h_{\hat{\nu}}(\cdot)$. As $-h$ is convex it is natural to expect an involution, that is
\[ -h_{\hat{\nu}}(\Lambda) = \sup_{\nu} [\Lambda(\nu) - P_{\hat{\nu}}(\nu)] ,\]
or, equivalently,
\[h_{\hat{\nu}}(\Lambda) = \inf_{\nu} [-\Lambda(\nu) + P_{\hat{\nu}}(\nu)]. \]

\begin{proposition} The $\hat{\nu}$-Pressure of the transverse function $\nu$  and the entropy
 of the Haar-invariant transverse probability $\Lambda$     are related by the expression:

\[h_{\hat{\nu}}(\Lambda) = \inf_{\nu} [-\Lambda(\nu) + P_{\hat{\nu}}(\nu)]. \]
\end{proposition}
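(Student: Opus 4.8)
The plan is to prove the identity by establishing two opposite inequalities, one of which is immediate from the definition of pressure and the other of which is witnessed by a single explicit transverse function, so that the infimum is in fact attained.

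First I would dispatch the inequality $h_{\hat{\nu}}(\Lambda) \le \inf_{\nu}\,[-\Lambda(\nu) + P_{\hat{\nu}}(\nu)]$. Since $\Lambda \in \mathcal{M}(\hat{\nu})$ is one of the competitors in the supremum defining the pressure (see Definition \ref{prepre}), for \emph{every} transverse function $\nu$ we have $P_{\hat{\nu}}(\nu) \ge \Lambda(\nu) + h_{\hat{\nu}}(\Lambda)$, equivalently $-\Lambda(\nu) + P_{\hat{\nu}}(\nu) \ge h_{\hat{\nu}}(\Lambda)$. Taking the infimum over $\nu$ preserves this bound, giving the first inequality with no extra work.

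For the reverse inequality I would exhibit one transverse function at which $-\Lambda(\nu) + P_{\hat{\nu}}(\nu)$ already equals $h_{\hat{\nu}}(\Lambda)$; the infimum is then forced to be $\le h_{\hat{\nu}}(\Lambda)$, and combined with the first step the two sides coincide and the infimum is attained. Let $V$ be the Haar-normalized function for which $\Lambda$ has modulus $\delta(x,y)=e^{V(y)-V(x)}$, and set $\nu^{*} := V(x)\,\hat{\nu}^{y}(dx)$. Because $V$ is Haar-normalized, Proposition \ref{pressurezero} applied with $U=V$ yields $P_{\hat{\nu}}(\nu^{*}) = 0$, while Theorem \ref{entropyjacobian} gives $h_{\hat{\nu}}(\Lambda) = -\Lambda(V\hat{\nu}) = -\Lambda(\nu^{*})$. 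Hence $-\Lambda(\nu^{*}) + P_{\hat{\nu}}(\nu^{*}) = -\Lambda(\nu^{*}) = h_{\hat{\nu}}(\Lambda)$, which is exactly the desired upper bound.

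There is essentially no analytic obstacle: the two nontrivial ingredients, namely the vanishing of the pressure of a Haar-normalized transverse function and the Jacobian formula for entropy, are already established, so the statement is a clean Legendre-type involution obtained by combining them. The only point meriting care is bookkeeping about the class of objects, since $\nu^{*}=V\hat{\nu}$ is in general a \emph{signed} transverse function (as $V$ need not be nonnegative); one should simply note that $\Lambda$ has already been extended linearly from $\mathcal{E}^{+}$ to $\mathcal{E}$, and that Proposition \ref{pressurezero} is being invoked in precisely this signed setting. With that understood, the infimum is not merely equal to $h_{\hat{\nu}}(\Lambda)$ but is attained at $\nu^{*}=V\hat{\nu}$.
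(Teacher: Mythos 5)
Your proposal is correct and takes essentially the same route as the paper's own proof: the first inequality comes directly from the definition of pressure with $\Lambda$ as a competitor, and the reverse inequality is witnessed by the single transverse function $\nu^{*}=V\hat{\nu}$ via Proposition \ref{pressurezero} and Theorem \ref{entropyjacobian}. Your added caveat that $V\hat{\nu}$ is in general a \emph{signed} transverse function, so that one must invoke the linear extension of $\Lambda$ to $\mathcal{E}$, is a point the paper passes over silently and is a worthwhile precision.
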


\begin{proof}
We observe that for any $\nu$ we have, by definition of pressure,
\[h_{\hat{\nu}}(\Lambda) \leq  [-\Lambda(\nu) + P_{\hat{\nu}}(\nu)]. \]
It follows that
\[h_{\hat{\nu}}(\Lambda) \leq \inf_{\nu} [-\Lambda(\nu) + P_{\hat{\nu}}(\nu)]. \]
On the other hand, if  $\Lambda$ as modulus $e^{V(y)-V(x)}$, where $V$ is $\hat{\nu}$-normalized, then, taking $\mu^y(dx) = V(x)\hat{\nu}^{y}(dx)$ we get, from Theorem \ref{entropyjacobian} and Proposition \ref{pressurezero},
\[ h_{\hat{\nu}}(\Lambda) = -\Lambda(\mu)\,\,\, and\,\,\, P_{\hat{\nu}}(\mu) =0.\]
Therefore,
\[h_{\hat{\nu}}(\Lambda) = -\Lambda(\mu) + P_{\hat{\nu}}(\mu)  \geq \inf_{\nu} [-\Lambda(\nu) + P_{\hat{\nu}}(\nu)].\]
\end{proof}

\section{Examples}\label{examples}

\subsection{Entropy and pressure in the XY model}

We consider the hypotheses and notations of section \ref{insp}.  In this case it is easy to see that
\[ H_V(f)(y) = \mathcal{L}_V(f)(\sigma(y)),\]
where $ \mathcal{L}_V$ is the classical Ruelle operator. We say that a bounded and measurable potential $V$ is normalized if it satisfies  $\mathcal{L}_V(1)=1$.

\begin{proposition} $V$ is normalized, iff, $V$ is Haar-normalized.
\end{proposition}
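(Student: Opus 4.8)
The plan is to reduce everything to the operator identity $H_V(f)(y)=\mathcal{L}_V(f)(\sigma(y))$ recorded just above the statement, specialized to $f\equiv 1$. First I would unwind the two definitions. By Definition \ref{normalized}, $V$ is Haar-normalized exactly when $\int e^{V(x)}\hat{\nu}^y(dx)=1$ for every $y$, which is precisely $H_V(1)\equiv 1$; and by the convention of this subsection, $V$ is normalized exactly when $\mathcal{L}_V(1)\equiv 1$. So the entire content of the proposition is the equivalence $H_V(1)\equiv 1 \iff \mathcal{L}_V(1)\equiv 1$.

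Next I would apply the identity at $f\equiv 1$ to obtain $H_V(1)(y)=\mathcal{L}_V(1)(\sigma(y))$ for all $y\in\Omega$. Thus $V$ being Haar-normalized is the same as requiring $\mathcal{L}_V(1)(\sigma(y))=1$ for every $y\in\Omega$. The converse implication of the proposition is then immediate: if $\mathcal{L}_V(1)\equiv 1$, then in particular $\mathcal{L}_V(1)(\sigma(y))=1$ for every $y$, so $V$ is Haar-normalized. For the remaining (nontrivial) direction I would use that the shift $\sigma\colon K^{\mathbb{N}}\to K^{\mathbb{N}}$, $\sigma(y_1,y_2,\ldots)=(y_2,y_3,\ldots)$, is surjective: given any $z=(z_1,z_2,\ldots)\in\Omega$, choosing any $a\in K$ and setting $y=(a,z_1,z_2,\ldots)$ yields $\sigma(y)=z$. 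Hence the range of $\sigma$ is all of $\Omega$, so $\mathcal{L}_V(1)(\sigma(y))=1$ for all $y$ forces $\mathcal{L}_V(1)(z)=1$ for all $z\in\Omega$, i.e. $\mathcal{L}_V(1)\equiv 1$, closing the equivalence.

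As an alternative I would avoid the operator identity altogether and just expand both definitions using the explicit form of $\hat{\nu}$ and of $\mathcal{L}_V$: Haar-normalization reads $\int_K e^{V(a,y_2,y_3,\ldots)}\,dm(a)=1$ while Ruelle-normalization reads $\int_K e^{V(a,x_1,x_2,\ldots)}\,dm(a)=1$, and since $(y_2,y_3,\ldots)$ and $(x_1,x_2,\ldots)$ each range over all of $K^{\mathbb{N}}$ as $y$, respectively $x$, range over $\Omega$, the two requirements are word-for-word identical. There is essentially no obstacle here; the only point worth flagging is that the nontrivial implication (Haar-normalized $\Rightarrow$ normalized) genuinely relies on the surjectivity of $\sigma$ on the full one-sided shift space, whereas the reverse implication is a mere specialization.
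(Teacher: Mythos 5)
Your proof is correct and follows essentially the same route as the paper: both directions come from evaluating the identity $H_V(1)(y)=\mathcal{L}_V(1)(\sigma(y))$ and using the surjectivity of the shift for the nontrivial implication. Your explicit remark that surjectivity is the only real input, plus the alternative direct expansion, are fine but add nothing beyond the paper's argument.
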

\begin{proof}
If $V$ is normalized then for any $y\in \Omega$ we have
\[ \int e^{V(x)}\,\hat{\nu}^y(dx) = H_V(1)(y) = \mathcal{L}_V(1)(\sigma(y)) =1 \]
which proves that $V$ is Haar-normalized.

If $V$ is Haar-normalized, then for any given $y\in \Omega$ we choose $z\in \Omega$, such that, $\sigma(z)=y$. It follows that
\[L_V(1)(y) =L_V(1)(\sigma(z)) = H_V(1)(z) =1\,,\, \forall\,\, y\in \Omega. \]
\end{proof} 	

\begin{proposition} If a probability measure $M$ on $\Omega$ satisfies $\mathcal{L}_V^*(M) =M$, for some normalized H\"{o}lder function $V$, then, it is Haar-invariant.
\end{proposition}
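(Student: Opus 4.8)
The plan is to reduce Haar-invariance to the fixed-point characterization of Proposition \ref{fixdual}. Since $V$ is normalized, the preceding proposition shows that $V$ is also Haar-normalized; hence by Proposition \ref{fixdual} it suffices to prove that $M$ is a fixed point of $H_V^{*}$, that is, $\int H_V(f)\,dM = \int f\,dM$ for every bounded measurable $f$, where I use the dual relation $\int f\,dH_V^{*}(M) = \int H_V(f)\,dM$ from (\ref{dual}). Using the identity $H_V(f)(y) = \mathcal{L}_V(f)(\sigma(y))$ recorded at the start of this subsection, the left side becomes $\int (\mathcal{L}_V(f)\circ \sigma)\,dM$, so the whole statement hinges on passing from this to $\int \mathcal{L}_V(f)\,dM$, which already equals $\int f\,dM$ by the hypothesis $\mathcal{L}_V^{*}(M)=M$.

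The key intermediate step, and the one I expect to carry the real content, is to show that $M$ is $\sigma$-invariant. For this I would use the standard commutation relation for the Ruelle operator: for bounded measurable $g$ and $h$ one has $\mathcal{L}_V((g\circ\sigma)\,h) = g\,\mathcal{L}_V(h)$, which follows immediately from $\sigma(a,x_1,x_2,\dots)=(x_1,x_2,\dots)$ inside the defining integral of $\mathcal{L}_V$, since $g(\sigma(a,x_1,x_2,\dots)) = g(x_1,x_2,\dots)$ factors out. Taking $h\equiv 1$ and using that $V$ is normalized, so that $\mathcal{L}_V(1)=1$, yields $\mathcal{L}_V(g\circ\sigma)=g$. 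Applying the hypothesis $\mathcal{L}_V^{*}(M)=M$ to the function $g\circ\sigma$ then gives $\int (g\circ\sigma)\,dM = \int \mathcal{L}_V(g\circ\sigma)\,dM = \int g\,dM$, i.e. $M$ is $\sigma$-invariant.

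With $\sigma$-invariance in hand the computation closes: $\int H_V(f)\,dM = \int (\mathcal{L}_V(f)\circ\sigma)\,dM = \int \mathcal{L}_V(f)\,dM = \int f\,dM$, where the middle equality uses $\sigma$-invariance (applied to $g=\mathcal{L}_V(f)$) and the last uses $\mathcal{L}_V^{*}(M)=M$. This establishes $H_V^{*}(M)=M$, and Proposition \ref{fixdual} then delivers that $M$ is Haar-invariant with Haar-Jacobian $e^{V}$, consistent with Remark \ref{rur1}. The only point requiring genuine care is the justification of the commutation identity and the $\sigma$-invariance it produces; everything else is bookkeeping with the dual operator.
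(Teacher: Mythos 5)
Your proof is correct, but it takes a genuinely different route from the paper's. The paper disposes of this proposition in one line by citing Proposition \ref{mimeo}: for a normalized H\"{o}lder $V$ the eigenvalue is $c=1$, so a fixed point $M$ of $\mathcal{L}_V^*$ is the eigenprobability $\rho$, and Proposition \ref{mimeo} already asserts the quasi-invariance identity for $\delta(x,y)=e^{V(y)-V(x)}$, which for a Haar-normalized $V$ is verbatim the definition of Haar-invariance (Proposition \ref{fixdual} is then invoked to identify the Jacobian $e^V$). That argument leans on the Ruelle--Perron--Frobenius machinery, namely $\int h\,d\rho=\lim_n \mathcal{L}_V^n(h)(z_0)/\mathcal{L}_V^n(1)(z_0)$, which is where the H\"{o}lder hypothesis enters. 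You instead go directly through the fixed-point characterization of Proposition \ref{fixdual}: you first derive $\sigma$-invariance of $M$ from the commutation relation $\mathcal{L}_V(g\circ\sigma)=g\,\mathcal{L}_V(1)=g$, and then chain $\int H_V(f)\,dM=\int \mathcal{L}_V(f)\circ\sigma\,dM=\int\mathcal{L}_V(f)\,dM=\int f\,dM$. This is self-contained, elementary, and notably never uses the H\"{o}lder hypothesis: it works for any bounded measurable normalized $V$, so it actually proves slightly more than the stated proposition. What it does not give you, and what Proposition \ref{mimeo} does, is the quasi-invariance of eigenprobabilities for non-normalized $V$ with eigenvalue $c\neq 1$. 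Both arguments are valid; yours is arguably the cleaner and more general one for this particular statement.
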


\begin{proof} From propositions \ref{mimeo}  and \ref{fixdual} we conclude that $M$ is Haar-invariant with Jacobian $e^{V}$.
\end{proof}

The identification of $\sigma$-invariant probabilities and Haar-invariant probabilities is in general false. There exists Haar-invariant probabilities which are not invariant for the shift map and reciprocally, as  next example shows.

\begin{example}\label{diferent}  Let $\mu$ be any probability measure on $\Omega$, such that the push-forward probability $\nu$ defined from $\int f\,d\nu := \int f\circ\sigma \,d\mu$ is not invariant for the shift map\footnote{For instance, $\mu=\delta_{(1,1,0,0,0,0...)}$}. From Theorem \ref{Mandmu} there exists a Haar-invariant probability $M$, such that, $\int f\circ \sigma \,dM = \int f\circ \sigma\,d\mu$, for any measurable and bounded function $f$ (because $f\circ\sigma$ is constant on classes). If $M$ were invariant for the shift map, then  $$\int f\,dM=\int f\circ\sigma\,dM = \int f\circ \sigma\,d\mu= \int f\,d\nu,$$ which is a contradiction, because $\nu$ is not invariant.

\medskip

On the other hand, there are shift-invariant probabilities which are not Haar-invariant for a fixed Haar-system $(G,\hat{\nu})$. For instance, consider $M=\delta_{0^{\infty}}$ where $0^{\infty}=(0,0,0,...) \in K^{\mathbb{N}}=[0,1]^{\mathbb{N}}$. In this case, supposing by contradiction that $M$ is Haar-invariant, where, for each $x$, $\hat{\nu}^{x}$ is identified with the Lebesgue measure $m$ on $[0,1]$, there must be a measurable and bounded function $V$ such that for any measurable and bounded function $f$
\[ \int f(a0^{\infty})e^{V(a0^{\infty})} dm(a) = f(0^{\infty}).\]
This is impossible because, as functions of $f$, the value on the right side can be easily changed without affecting the mean of the left side.

\end{example}

\begin{proposition} Let $M$ be the equilibrium measure for a normalized function $V$. Let $\Lambda$ be the transverse measure defined by (\ref{eq2}). Then, $h_{\hat{\nu}}(\Lambda) = -\int V\, dM.$
\end{proposition}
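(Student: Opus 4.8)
The plan is to recognize that this proposition is, once the hypotheses are unwound, an immediate application of Theorem \ref{entropyjacobian}. The only genuine work is to check that the equilibrium measure $M$ of a normalized $V$ fits the framework of the preceding subsections, namely that it is a Haar-invariant probability with Jacobian $e^{V}$ and that the $\Lambda$ defined by (\ref{eq2}) is the Haar-invariant transverse probability associated to it.

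First I would use the fact that, since $V$ is normalized for the Ruelle operator ($\mathcal{L}_V(1)=1$), its eigenvalue is $c=1$ and its eigenfunction is $\varphi\equiv 1$; hence the equilibrium measure $M$ coincides with the eigenprobability $\rho$ of the dual operator, so that $\mathcal{L}_V^{*}(M)=M$. By the proposition established just above (a normalized H\"older $V$ with $\mathcal{L}_V^{*}(M)=M$ produces a Haar-invariant $M$), it follows that $M$ is Haar-invariant with Jacobian $e^{V}$. The same normalization equivalence proved earlier guarantees that $V$ is Haar-normalized, so that $\delta(x,y)=e^{V(y)-V(x)}$ is a legitimate modulus.

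Next I would invoke Proposition \ref{istransm} to conclude that the $\Lambda$ defined by (\ref{eq2}) is a Haar-invariant transverse probability of modulus $\delta(x,y)=e^{V(y)-V(x)}$, hence $\Lambda\in\mathcal{M}(\hat{\nu})$. At this point the hypotheses of Theorem \ref{entropyjacobian} hold verbatim: $\Lambda$ has modulus $e^{V(y)-V(x)}$ with $V$ Haar-normalized, and the measure recovered by the inverse correspondence of Theorem \ref{equivalent} is exactly our equilibrium measure, since (\ref{eq2}) builds $\Lambda$ directly from $M$. Applying Theorem \ref{entropyjacobian} then yields $h_{\hat{\nu}}(\Lambda)=-\int V\,dM$, which is the claimed identity.

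The step I expect to require the most care is purely a matter of bookkeeping around the phrase \emph{equilibrium measure}: Theorem \ref{Mandmu} shows that many Haar-invariant probabilities share the Jacobian $e^{V}$, so one must ensure that the $M$ in the statement is the very $M$ fed into Theorem \ref{entropyjacobian}. This is settled by noting that $\Lambda$ is constructed from the given $M$ through (\ref{eq2}) and that the inverse map of Theorem \ref{equivalent} returns precisely this $M$; no appeal to uniqueness among Jacobian-$e^{V}$ probabilities is needed. Everything else is a direct substitution into the already-established entropy formula.
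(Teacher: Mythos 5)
Your proposal is correct and follows essentially the same route as the paper, which simply states that the claim follows from Theorem \ref{entropyjacobian}; you have merely made explicit the (routine) verifications that the equilibrium measure of a normalized $V$ is Haar-invariant with Jacobian $e^{V}$ and that the $\Lambda$ of (\ref{eq2}) lands in $\mathcal{M}(\hat{\nu})$ with the correct associated $M$. Your remark about recovering the same $M$ via the inverse map of Theorem \ref{equivalent}, rather than appealing to any uniqueness among Jacobian-$e^{V}$ probabilities, is exactly the right way to close that bookkeeping point.
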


\begin{proof} The claim easily follows from Theorem \ref{entropyjacobian}.
\end{proof}		

The above proposition shows that the Haar-entropy is a natural generalization for Haar-Systems of the Kolmogorov-Sinai entropy.
We refer to \cite{LMMS} for a discussion regarding Kolmogorov-Sinai entropy and the concept of negative entropy for the generalized $XY$ model.

\bigskip

 The concept of pressure is very different when considered the Thermodynamic Formalism setting instead of Haar Systems. As an example of this fact, note that in Thermodynamic Formalism we have
$$ P_{\sigma}(f\circ \sigma)=P_{\sigma}(f)= \sup_{\mu \, invariant} [\int f\,d\mu + h_m(\mu)],$$ for any continuous function $f$. On the other hand, in Haar Systems, if we consider a transverse function $\nu$ in the form $\nu= U\hat{\nu}$, where $U=f\circ\sigma$, then $U$ is constant on classes. It follows that $\log(\tilde U) = U$, and, from Proposition \ref{pressuretilde}, we get
$$P_{\hat{\nu}}((f\circ\sigma)\hat{\nu})= P_{\hat{\nu}}(U\hat{\nu})= \sup_{y\in \Omega}\, [U(y)] =\sup_{y\in \Omega}\, [f(\sigma(y))]=\sup_{z\in \Omega}\, [f(z)].$$
 But, if we consider the transverse function $f\hat{\nu}$,
\[P_{\hat{\nu}}(f\hat{\nu}) = \sup_{y\in \Omega}\, [\log\int e^{f(x)} \,\hat{\nu}^y(dx)] = \sup_{y\in \Omega}\, [\log\int e^{f(a,y_2,y_3,...)} \,dm(a)]. \]

The main reason for the difference between the two kinds of pressure is in some sense described in Example \ref{diferent}. When we consider the Haar-entropy for a different set of probabilities and then consider the pressure, as a ``Lengendre's transform'' of $-h$ (which is defined over this different set), it is natural to get a different meanings for pressure.

\subsection{Haar-systems dynamically defined} \label{haar-dy}

 Assume that $\Omega$ is a complete and separable metric space and $\mathcal{B}$ denotes the Borel sigma algebra  on $\Omega$. In this section we generalize results of section \ref{insp}.

Suppose that $T:\Omega \to \Omega$ is a continuous map and consider the groupoid $G$ defined be the equivalence relation $x\sim y$, if and only if, $T(x)=T(y)$. In this way any class is closed and any transverse function $\hat{\nu}$ (which is a probability on each class) can be identified as a choice of a probability $m_y$ over the set $T^{-1}(y)$, for each $y\in \Omega$.  For any measurable and bounded function $U:\Omega \to\mathbb{R}$ and transverse function $\hat{\nu}$ we define the generalized Ruelle Operator
\[\mathcal{L}_U(f)(y)= \int_{T(x)=y} e^{U(x)}f(x) \nu^{x}(dx)=\int_{T(x)=y} e^{U(x)}f(x)\,dm_y(x), \]
where $\nu^{x}=\nu^{z}$, iff, $T(x)=T(z)$, that is, $\nu^x$ is a probability $m_y$, if $x\in T^{-1}(y)$.

We say that a measurable and bounded function $V$ is normalized if
\[\int_{T(x)=y} e^{V(x)} \nu^{x}(dx)=1 \,,\,\, \forall y\, \in \Omega.\]

\begin{proposition} Under above hypotheses and notations, suppose that an invariant probability $M$ for $T$ satisfies $\mathcal{L}_V^{*}(M)=M$, for some normalized (measurable and bounded) function $V$. Then, $H_V^{*}(M)=M$, that is, $M$ is Haar-invariant.
\end{proposition}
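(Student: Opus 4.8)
The plan is to reduce everything to the fixed-point characterization of Haar-invariance provided by Proposition \ref{fixdual}: since $V$ is assumed normalized, it suffices to show that $H_V^{*}(M)=M$, i.e. that $\int H_V(f)\,dM=\int f\,dM$ for every measurable and bounded $f$. The entire argument then hinges on a single structural identity relating the transfer operator $H_V$ of the Haar system to the generalized Ruelle operator $\mathcal{L}_V$.

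First I would establish that, exactly as in the XY model of Section \ref{insp}, one has
$$H_V(f)(y)=\mathcal{L}_V(f)(T(y)),\qquad \forall y\in\Omega.$$
This follows directly from the fiberwise description of the transverse function: by construction $\hat{\nu}^{y}$ is the probability $m_{T(y)}$ supported on the class $[y]=T^{-1}(T(y))$, so that
$$H_V(f)(y)=\int e^{V(x)}f(x)\,\hat{\nu}^{y}(dx)=\int_{T(x)=T(y)} e^{V(x)}f(x)\,dm_{T(y)}(x)=\mathcal{L}_V(f)(T(y)).$$
With this identity in hand, I would compute $\int H_V(f)\,dM$ by chaining the two hypotheses: using the identity, $\int H_V(f)\,dM=\int(\mathcal{L}_V(f)\circ T)\,dM$; since $M$ is $T$-invariant, this equals $\int \mathcal{L}_V(f)\,dM$; and since $\mathcal{L}_V^{*}(M)=M$ (the eigenvalue being $1$ precisely because $V$ is normalized, so $\mathcal{L}_V(1)=1$), this in turn equals $\int f\,dM$. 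Hence $\int H_V(f)\,dM=\int f\,dM$ for all measurable bounded $f$, which is exactly $H_V^{*}(M)=M$, and Proposition \ref{fixdual} then yields that $M$ is Haar-invariant with Jacobian $e^{V}$.

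The computation itself is short, and the main (mild) obstacle is the rigorous passage from the abstract transverse function $\hat{\nu}$ to the fiberwise description $\hat{\nu}^{y}=m_{T(y)}$ and the resulting identity $H_V(f)=\mathcal{L}_V(f)\circ T$; this is the point where the completeness and separability of $\Omega$ enter, so that the fibers $T^{-1}(y)$ genuinely support the family $m_y$ (cf. the role of Rokhlin's disintegration theorem flagged in the introduction), and where one must confirm that $\hat{\nu}$-normalization of $V$ coincides with the condition $\mathcal{L}_V(1)=1$. Once that identity is secured, the two invariance properties—$T$-invariance of $M$ and the eigenmeasure relation $\mathcal{L}_V^{*}(M)=M$—combine by direct substitution, with no further analytic input required.
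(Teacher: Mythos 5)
Your argument is correct and follows exactly the paper's own proof: both hinge on the identity $H_V(f)(y)=\mathcal{L}_V(f)(T(y))$, then apply $T$-invariance of $M$ followed by $\mathcal{L}_V^{*}(M)=M$ to conclude $H_V^{*}(M)=M$, invoking Proposition \ref{fixdual}. The extra care you take in justifying the fiberwise identification $\hat{\nu}^{y}=m_{T(y)}$ is a reasonable elaboration of what the paper states without comment, but it does not change the route.
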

\begin{proof}
	Observe that $H_V(f)(y) = \mathcal{L}_V(f)(T y)$. Then,
	\[\int H_V(f)(y)\, dM(y)   =  \int \mathcal{L}_V(f)(T y)\, dM(y)  \]
	\[\stackrel{M\, is\, T\,invariant }{=}     \int \mathcal{L}_V(f)(y)\, dM(y) = \int f(y)\,dM(y) . \]
\end{proof}

The definition of entropy $h_{\hat{\nu}}$ in this work can be applied to the case of any invariant measure $M$ satisfying $\mathcal{L}_V^{*}(M)=M$, for some $V$ normalized. Such $M$ is associated with a transverse probability $\Lambda$ by Theorem \ref{equivalent}.

\bigskip

In section 2 the a priori probability $m_y$ is a fixed probability $m$ independently of $y$ in a natural way, because in that example the pre-images of any point $y$ are identified with a fixed set $K$, where $\Omega=K^{\mathbb{N}}$. Observe that for a general dynamic system there is not a natural identification of pre images of different points, that is, the sets $T^{-1}(y)$ and $T^{-1}(z)$ can be of quite distinct nature. One of the simplest examples of such kind are subshifts of finite type, where distinct points can have sets of pre-images with different cardinalities. In the present general case, in contrast with the XY model (described before), it is natural to take as an a priori probability a general transverse function.

\bigskip

In contrast with Example \ref{diferent}, the next theorem shows that any $T-$invariant probability can be seen as a Haar-invariant probability. The items 1. and 2. of the theorem say that $\mu^{x}$ is some kind of kernel, when considered almost every point ($M$-a.e.) $x\in \Omega$ (see Definition \ref{kernel}). The item 3. says that this ``kernel'' is a transverse function and the item 4. says that $M$ is ``Haar-invariant'' with Jacobian $J=e^V=1$.  In the proof we use the Rokhlin's disintegration theorem. A reference for this topic is  chapter 5 in \cite{OV}.

\begin{theorem}\label{Rokhlin} Let $\Omega$ be a complete and separable metric space and $T:\Omega \to \Omega$ be a continuous map. Consider the groupoid $G$ defined from the equivalence relation $x\sim z$, if and only if, $T(x)=T(z)$.

Then, for any fixed  $T$-invariant  probability $M$ on $\Omega$, there exists a family $\{\mu^{x}\,|\, x\in \Omega\}$ of probabilities on $\Omega$ satisfying: \newline
	1. $\mu^x$ has support on $[x]$ for $M$-a.e. $x\in \Omega$;\newline
	2. for each measurable set $E\subseteq \Omega$, the map $x\to \mu^{x}(E)$ is measurable;\newline
	3. $\mu^{x}=\mu^{z}$ for any $x,z\in\Omega$ satisfying $x\sim z$;\newline
	4. $\int f(x)\,dM(x) = \iint  f(z)\,\mu^{x}(dz)dM(x)$ for any measurable and bounded function $f:\Omega\to \mathbb{R}$.
\end{theorem}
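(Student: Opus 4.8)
The plan is to realize the equivalence classes as the fibers of $T$ and to obtain the family $\{\mu^{x}\}$ as a pullback of the Rokhlin conditional measures. First I would observe that $x\sim z$ means exactly $T(x)=T(z)$, so the class of $x$ is the fiber $[x]=T^{-1}(T(x))$, and the partition $\xi$ of $\Omega$ into these fibers is precisely the partition generated by $T$. Since $\Omega$ is a complete separable metric space and $T$ is continuous, $\xi$ is a measurable partition: fixing a countable generating family $\{B_n\}$ of the Borel $\sigma$-algebra and setting $A_n=T^{-1}(B_n)$, the partition generated by $\{A_n\}$ separates two points exactly when their $T$-images differ, hence coincides with $\xi$. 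Rokhlin's disintegration theorem (chapter 5 of \cite{OV}) then applies to $M$ and $\xi$ and yields a family $\{m_y\}_{y\in\Omega}$ of probabilities such that $m_y$ is supported on $T^{-1}(y)$ for $(T_*M)$-a.e. $y$, the map $y\mapsto m_y(E)$ is measurable for each Borel $E$, and
\[\int f\,dM = \int\Big(\int f\,dm_y\Big)\,d(T_*M)(y)\]
for every bounded measurable $f$, where $T_*M$ is the base (quotient) measure.

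Next I would use the hypothesis that $M$ is $T$-invariant, i.e. $T_*M=M$, and simply define $\mu^{x}:=m_{T(x)}$. The four properties then follow by bookkeeping. For property 1, the Rokhlin support statement holds for $(T_*M)$-a.e. $y$; since $M(\{x: T(x)\in N\})=(T_*M)(N)=0$ for the exceptional null set $N$, we get that $\mu^{x}=m_{T(x)}$ is supported on $T^{-1}(T(x))=[x]$ for $M$-a.e. $x$. Property 2 is the composition of the measurable map $y\mapsto m_y(E)$ with the continuous (hence measurable) map $T$. Property 3 is immediate: if $x\sim z$ then $T(x)=T(z)$, so $\mu^{x}=m_{T(x)}=m_{T(z)}=\mu^{z}$, which is exactly the transverse-function condition.

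For property 4, I would set $g(y):=\int f\,dm_y$ and compute
\[\iint f(z)\,\mu^{x}(dz)\,dM(x)=\int g(T(x))\,dM(x)=\int g\,d(T_*M)=\int g\,dM,\]
using the definition of $\mu^{x}$, the change of variables for the pushforward, and invariance. On the other hand the disintegration identity together with $T_*M=M$ gives $\int f\,dM=\int g\,d(T_*M)=\int g\,dM$, so the two sides agree. This exhibits $\{\mu^{x}\}$ as a transverse function for which $M$ is Haar-invariant with Jacobian $e^{V}=1$ (that is, $V\equiv 0$), which is the intended interpretation.

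I expect the only genuine obstacle to be the verification that the fiber partition $\xi$ is a measurable partition in the sense required by Rokhlin's theorem; once the Polish structure of $\Omega$ and the continuity of $T$ are used to produce the countable generating family as above, the remaining steps are a careful but direct manipulation of conditional measures, pushforwards, and the invariance identity $T_*M=M$. A minor point to handle cleanly is that the Rokhlin almost-everywhere statements are stated with respect to the base measure $T_*M$, so I would consistently transport them back to $M$-almost-everywhere statements through $T_*M=M$.
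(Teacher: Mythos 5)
Your proposal is correct and follows essentially the same route as the paper: both arguments verify that the fiber partition of $T$ is a measurable partition via a countable generating family pulled back through $T$, apply Rokhlin's disintegration theorem, use $T$-invariance to identify the quotient measure with $M$ itself, and define $\mu^{x}:=m_{T(x)}$. The only detail the paper adds that you leave implicit is that each fiber $T^{-1}\{y\}$ is closed (by continuity of $T$), which upgrades $m_y(T^{-1}(y))=1$ to the statement that the support of $\mu^x$ is contained in $[x]$.
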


\begin{proof}  	
	As $\Omega$ is a complete and separable metric space there exists an enumerable base of open sets $A_1,A_2,A_3,...$. This means that for each point $x\in \Omega$ and open set $U$ containing $x$ there exists some $A_i$ satisfying $x\in A_i \subseteq U$. Let $\mathcal{P}$ be the partition of $\Omega$ defined in the following way: $x$ and $z$ belong to the same element of the partition, if and only if, $\chi_{A_i}(T(x))=\chi_{A_i}(T(z))$, for any $i \in \mathbb{N}$. We observe that $\mathcal{P}$   is the partition of $\Omega$ in the classes of $G$, that is, two points $x$ and $z$ are on the same element of the partition $\mathcal{P}$, if and only if, $T(x)=T(z)$. Indeed, clearly $x\sim y$ implies that $x$ and $y$ belong to the same element of the partition. Reciprocally, if $T(x)\neq T(y)$, there exists an open set $U$ such that $T(x)\in U$ and $T(z)\notin U$. It follows that for some $A_i$ we have  $T(x)\in A_i\subseteq U$ and $T(z)\notin A_i$, which proves that $x$ and $y$ belong to different elements of the partition.
	
	We claim that $\mathcal{P}$ is a measurable partition. Indeed, it's only necessary to consider the partitions $\mathcal{P}_n$, $n\in \mathbb{N}$, defined in the following way: two points $x$ and $z$ belong to the same element of the partition $\mathcal{P}_n$, if and only if, $\chi_{A_i}(T(x))=\chi_{A_i}(T(z))$, for any $i \in \{1,...,n\}$. Observe that $\mathcal{P}_n$ has $2^{n}$ elements,
	\[\mathcal{P}_1 \prec \mathcal{P}_2\prec  \mathcal{P}_3\prec ... \]
	and $\mathcal{P} =  \bigvee_{n\geq 1} \mathcal{P}_n$. This proves the claim.

	Remember that two points $x$ and $z$ are on the same element of the partition $\mathcal{P}$, if and only if, $T(x)=T(z)$.
	We denote by $P_y$ the element of the partition $\mathcal{P}$ that contains the pre images of $y$, that is, $P_y=\{x\in \Omega\,|\,T(x)=y\}$. Observe that we can identify  $\Omega$ with $\mathcal{P}$ from $y\to P_y$. We define $\pi: \Omega \to \mathcal{P}$ by the rule: $\pi(x)$ is the element of the partition $\mathcal{P}$ that contains $x$. In this way $\pi(x)=P_y$, if and only if, $T(x)=y$. We say that $\mathcal{Q}\subseteq \mathcal{P}$ is measurable if the set $\pi^{-1}(\mathcal{Q})$ is a measurable subset of $\Omega$. For a given invariant probability $M$ on $\Omega$ we associate a probability $\hat{M}$ on $\mathcal{P}$ by
	\[\hat{M}(\mathcal{Q}) := M(\pi^{-1}(\mathcal{Q})), \]
	where $\mathcal{Q}\subseteq \mathcal{P}$ is measurable. Observe that using the identification  $y\to P_y$, for any given measurable subset $Q\subset \Omega$ we can associate the measurable subset $\mathcal{Q}=\{P_y\,|\, y\in Q\}$ of $\mathcal{P}$. Furthermore, as $M$ is $T$-invariant,
	\[\hat{M}(\mathcal{Q}) = M(\pi^{-1}(\mathcal{Q})) = M(\pi^{-1}(\{P_y\,|\, y\in Q\}) = M( T^{-1} (Q)) {=} M(Q) . \]

	\bigskip
	
	As the metric space $\Omega$ is complete and separable and the partition $\mathcal{P}$ is measurable, from Rokhlin's disintegration theorem (see \cite{OV}), any invariant probability $M$ admits a disintegration, which is a family of probabilities $\{m_P\,|\, P\in \mathcal{P}\}$ on $\Omega$ satisfying, for any measurable set $E\subset \Omega$, \newline
	1. \, $m_{P}(P) = 1$,\, for $\hat{M}-$a.e. $P\in \mathcal{P}$ \newline
	2. \, $P \to m_{P}(E)$ is measurable\newline
	3. \, $M(E) = \int m_{P}(E)\, d\hat{M}(P)$
	
	\bigskip
	
	Using the identification $y\to P_y$ we obtain a family of probabilities $\{m_y\,|\, y\in \Omega\}$ on $\Omega$ satisfying, for any measurable set $E\subset \Omega$, \newline
	1'. \, $m_{y}(T^{-1}(y)) = 1$ for $M-$ a.e. $y\in \Omega$ \newline
	2'. \, $y \to m_{y}(E)$ is measurable\newline
	3'. \, $M(E) = \int m_{y}(E)\, dM(y).$
	
	\bigskip

	We define, for each $x\in \Omega$, the probability $\mu^{x}:= m_{T(x)}$, that is,  $\mu^{x}:= m_{y}$ if $x \in T^{-1}(y)$.
	By construction, $x\sim z$ implies $\mu^{x}=\mu^{z}$. As $M$ is $T$-invariant, it follows from 1'. that $\mu^{x}([x])=1$, for
	$M-$ a.e. $x\in \Omega$. The sets $[x]=T^{-1}\{y\}$ are closed, because $T$ is continuous, therefore $\mu^x$ has support on $[x]$ for $M$-a.e. $x\in \Omega$. Furthermore, for a fixed measurable set $E$, as $x\to y=T(x)$ and $y\to m_y(E)$ are measurable maps, we obtain that $x\to \mu^{x}(E)$ is measurable.
	
	In order to conclude the proof it remains to prove the item 4. of the theorem. For any measurable and bounded function $f:\Omega \to \mathbb{R}$ we have, from 3'. and using the fact that  $M$ is $T$-invariant,
	\[ \int f(x) \, dM(x) =\iint f(z) \,dm_{x}(z) d{M}(x)= \iint f(z) \,dm_{T(x)}(z) d{M}(x)\]
	\[= \iint f(z) \,\mu^x (dz) dM(x).    \]
	
\end{proof}

\bigskip

In the next corollary we suppose that any class is finite and we consider the transverse function which is the counting measure on each class. We remark that it is a finite measure but not a probability. Anyway, an easy normalization is sufficient  to get a probability, that is, to replace  $\sum_{T(x)=T(z)}$ by $\frac{1}{\#[x]}\sum_{T(x)=T(z)}$ and $J(z)$ by $\tilde{J}(z):=(\#[x])\cdot J(z)$.

\bigskip

\begin{corollary}
	Suppose that $M$ is a complete and separable metric space and suppose that the continuous map $T:M\to M$ is such that any point $y$ has a finite number of pre images. Then, for any $T$-invariant probability $M$ there exists a bounded function $J$ defined for $M-$a.e. $x\in \Omega$ (a Haar-Jacobian of $M$) satisfying, for $M-$a.e. $x \in \Omega$,  $\sum_{T(x)=T(z)} J(z) = 1$ and
	\[ \int f(x)\,dM(x) = \int \sum_{T(x)=T(z)} J(z)f(z)\, dM(x).  \]
\end{corollary}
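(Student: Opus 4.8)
The plan is to read off the Haar-Jacobian $J$ directly from the disintegration family $\{\mu^x\}$ produced by Theorem \ref{Rokhlin}, exploiting the hypothesis that every fibre $[x]=T^{-1}(T(x))$ is finite. First I would apply Theorem \ref{Rokhlin} to the given $T$-invariant probability $M$ to obtain probabilities $\mu^x$ satisfying items 1--4. Since $[x]$ is a finite set, for $M$-a.e. $x$ the probability $\mu^x$ is purely atomic and supported on $[x]$, so $\mu^x=\sum_{z\in[x]}\mu^x(\{z\})\,\delta_z$ with $\sum_{z\in[x]}\mu^x(\{z\})=1$. This already indicates that the atom weights are the natural candidate for $J$.

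Next I would set $J(z):=\mu^z(\{z\})$. The consistency point is item 3 of Theorem \ref{Rokhlin}: if $z\sim x$ then $\mu^x=\mu^z$, hence $\mu^x(\{z\})=\mu^z(\{z\})=J(z)$ for every $z\in[x]$. Consequently, for $M$-a.e. $x$,
$$\int f(z)\,\mu^x(dz)=\sum_{z\in[x]}f(z)\,\mu^x(\{z\})=\sum_{T(x)=T(z)}J(z)f(z),$$
and taking $f\equiv 1$ gives $\sum_{T(x)=T(z)}J(z)=\mu^x([x])=1$ for $M$-a.e. $x$. Substituting the displayed identity into item 4 yields
$$\int f\,dM=\iint f(z)\,\mu^x(dz)\,dM(x)=\int\sum_{T(x)=T(z)}J(z)f(z)\,dM(x),$$
which is the asserted formula; boundedness of $J$ is immediate since $0\le J\le 1$.

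The one genuinely non-routine point, and the step I expect to be the main obstacle, is the measurability of $J$. Item 2 of Theorem \ref{Rokhlin} only gives measurability of $x\mapsto\mu^x(E)$ for each \emph{fixed} Borel set $E$, whereas $J$ evaluates $\mu^x$ on the moving atom $\{x\}$. To handle this I would write $J(x)=\int \chi_{\Delta}(x,z)\,\mu^x(dz)$, where $\Delta=\{(x,z):x=z\}$ is the diagonal, which is closed, hence Borel, in $\Omega\times\Omega$ because $\Omega$ is metric. Since $x\mapsto\mu^x$ is a measurable kernel (item 2), a standard monotone-class argument shows that $x\mapsto\int h(x,z)\,\mu^x(dz)$ is measurable for every bounded jointly measurable $h$: it holds for product indicators $h=\chi_A(x)\chi_B(z)$, since then the integral equals $\chi_A(x)\,\mu^x(B)$; it extends to the generated algebra by linearity and to all bounded $\mathcal B\otimes\mathcal B$-measurable $h$ by the functional monotone class theorem. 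Applying this to $h=\chi_{\Delta}$ gives the measurability of $J$, completing the argument. Finally, I would invoke the normalization remark stated just before the corollary to reconcile the counting-measure transverse function with the probability normalization underlying Theorem \ref{Rokhlin}.
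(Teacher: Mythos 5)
Your proposal is correct and follows essentially the same route as the paper: both apply Theorem \ref{Rokhlin}, define $J$ as the atom weights of the finitely supported conditional probabilities $\mu^x$ (consistent across a class since $\mu^x=\mu^z$ for $x\sim z$), and substitute into item 4 of that theorem. The only difference is that you supply a measurability argument for $x\mapsto\mu^x(\{x\})$ via the diagonal and a monotone class argument, a point the paper passes over silently.
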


\begin{proof}
	Using the notations of the proof of the above theorem, as for $M$-a.e.  $x$ the probability $\mu^x$ has support in the finite set $[x]$, there exists, for any such $x$, a function $J^x$ defined over the class of $x$ satisfying $\mu^x(\{z\})=J^x(z)$, for any $z\sim x$. We define a function $J$ a.e. by $J(z)=J^{x}(z)$, if $x\sim z$, and $\mu^x([x])=1$ otherwise. The images of $J$ belongs to $[0,1]$, clearly $\sum_{T(x)=T(z)} J(z) = 1$ and, furthermore, from the above theorem and the definition of $J$ it follows that
	\[ \int f(x)\,dM(x) = \iint f(z) \,\mu^x (dz) dM(x)  =  \int \sum_{T(x)=T(z)} J(z)f(z)\, dM(x).\]   	
\end{proof}

\begin{example} If $\Omega \subseteq \{1,...,d\}^{\mathbb{N}}$ is a subshift of finite type, defined from an aperiodic matrix, and $M$ is an invariant probability for the shift map $\sigma$, then for $M$-a.e. $x\in \Omega$, $x=(x_0,x_1,x_2,...)$, there exists
	\[J(x)= \lim_{n\to\infty} \frac{M([x_0,x_1,...,x_n])}{M([x_1,x_2,...,x_n])}.\]
	In Thermodynamic Formalism this function $J$ (called Jacobian of the measure or, sometimes the inverse of the Jacobian) is $M-$integrable and for any measurable and bounded function $f:\Omega \to \mathbb{R}$ it satisfies
	\[ \int f(y)\, dM(y) = \int \sum_{\sigma(z)=y} J(z)f(z)\, dM(y). \]
	As $M$ is $\sigma$-invariant, for any measurable and bounded function $f$,
	\[ \int f(y)\, dM(y) =  \int \sum_{\sigma(z)=\sigma(y)} J(z)f(z)\, dM(y). \]
	Therefore, $M$ is also Haar-invariant with Haar-Jacobian $J$.
	
	The Kolmogorov-Sinai entropy of $M$ is given by
	\[ h_\sigma(M) = -\int \log(J)\, dM, \]
	which is compatible with the definition of Haar-entropy of $M$, introduced in this work.
\end{example}  	

Note that in the case of the groupoid of section \ref{insp} (taking $V$ continuous and positive and assuming that the equivalence classes are finite) if $\rho$ is an eigenprobability for the operator  $H_V^*$ associated to the eigenvalue $\lambda>0$, then the condition (\ref{leo}) is true for any cylinder set $B$. Indeed, $\lambda\, \int I_B d \rho = \int \mathcal{L}_V \,(I_B) (\sigma)\, d \rho $.

\subsection{Extremal cases}

In this section we suppose that $\Omega$ is measurable and consider as examples two extremal cases which  are: 1) the case  where $[x]=\Omega \,,\,\forall \,x\in \Omega$, and 2) the case where $[x]=\{x\}\,,\, \forall\,x\in\Omega$.
 We will explore on these examples the meaning of the theoretical results we get before. In this procedure we will recover some classical concepts which are well known on the literature. This shows that our reasoning is quite justifiable.

\begin{example}\label{tudo} Consider the case $[x]=\Omega$ for any $x \in \Omega$, that is
 $x\sim y$, for any $x,y \in \Omega$.  In this case, the transverse functions are the measures on $\Omega$ and we fix a probability $m$ (that plays the role of the transverse function $\hat{\nu}$ on $\Omega$).
 The Haar System $(G,m)$ will remain fixed on this example.

 A function $V$ is Haar-normalized if
\[\int e^{V(x)} dm(x) = 1.\]
For any function $f$ we have that $H_U(f)$ is constant and equal to $\int e^{U(x)} f(x) dm(x)$.
A probability $M$ is Haar-invariant with Jacobian $e^{V}$, if and only if, $dM=e^{V}dm$, because  $H_V^{*}(M)=M$ means
\[\int f(x) e^{V(x)}  dm(x) = \int f(y)dM(y)\,,\,\, \forall\, f.\]

\medskip

For a fixed Haar-invariant probability $M$ with Jacobian $e^{V}$ we associate the transverse measure $\Lambda$ acting on measures as
\[\Lambda(\nu) = \int e^{V(x)}d\nu(x)dM(y) = \int e^{V(x)}d\nu(x).\]
On this way, it is more natural to consider that for a Haar-normalized function $V$  we associate to it the above $\Lambda$ - which is the unique transverse probability for the modular function $\delta(x,y)=e^{V(y)-V(x)}$.

\medskip

The entropy of $\Lambda$ associated to $V$ is
\[h_{m}(\Lambda) = -\int V(x)dM(x) = - \int V(x)e^{V(x)}dm(x).\]
If we call $P(x) = e^{V(x)}$, then,
\[h_{m}(\Lambda) = - \int P(x)\log(P(x))dm(x),\]
which is a classical expression of the entropy when there is no dynamics.

\medskip

The pressure of a measure $\nu$ satisfies
\[P_{m}(\nu) = \sup_{V\,normalized} [\int e^{V(x)}\,d\nu(x) - \int V(x)e^{V(x)}dm(x) ].\]
Then,
\[P_{m}(\nu) = \sup_{P>0,\,\int P(x)dm(x)=1} [\int P(x)\,d\nu(x) - \int P(x)\log P(x) dm(x) ].\]
If $d\nu = Udm$, then,
\[P_{m}(U\,m) = \sup_{P>0,\,\int P(x)dm(x)=1} [\int  U(x)\,P(x)dm(x) - \int \log (P(x))\,P(x)dm(x) ]\]
\[\stackrel{Prop. \,\ref{pressuretilde}}{=}  \log \int e^{U(x)}dm(x).\]

Note that $\int e^{U(x)}dm(x)$ (after normalization) is a classical expression for the Gibbs probability for the potential $U$ (when there is no dynamics).

\end{example}

\bigskip

\begin{example} \label{so}

Suppose that $[x]=\{x\}$ for any $x\in \Omega$, that is
$x\sim y$, if and only if, $x=y$. In this case any transverse function is a function $\nu$. Indeed,  for each $x$, we associate the class $[x]=\{x\}$, and then we assign to it a positive number $\nu(x)$. We fix as $\hat{\nu}^{y}$ the Dirac delta measure on $\{y\}$, for each $y\in \Omega$, that is, $\hat{\nu}$ is the constant function $1$. We consider fixed the Haar System $(G,\hat{\nu})$.

The unique Haar-normalized function is $V\equiv 0$ and any probability $M$ on $\Omega$ is Haar-invariant with Jacobian $e^{V}=1$. For any function $U$ we have $\log(\tilde U) = U$ and $U-\log(\tilde U) = 0 = V$.

For each probability $M$ we associate a transverse measure $\Lambda$ by
\[ \Lambda(\nu) = \iint e^{V(x)}\nu^{y}(dx)dM(y) = \int \nu(y) \, dM(y).\]
On the other hand, as $[x]=\{x\}$, the unique modular function is $\delta(x,x)=1$. Then, any transverse measure has the above form.

The entropy of any transverse probability $\Lambda$  is equal to
\[h_{\hat{\nu}}(\Lambda) = -\int V(x)dM(x) = 0,\]
and the pressure of a transverse function $\nu$ is given by
\[P_{\mu}(\nu) = \sup_{\Lambda} [\Lambda(\nu) +h_{\hat{\nu}}(\Lambda)] = \sup_{M\, probability}\int \nu(y) \, dM(y) =\sup_{y}\,\nu(y).\]

\end{example}


\begin{thebibliography}{99}


\bibitem{ACR}
D. Aguiar, L. Cioletti, and R. Ruviaro, 
A variational principle for the specific entropy for symbolic systems with uncountable alphabets,
Math. Nachr., (2018) 291, no. 17-18, 2506–-2515.


	\bibitem{Ana1} C. Anatharaman-Delaroche, Ergodic Theory and Von Neumann algebras: an introduction, preprint Univ d'Orleans (France)
	
	\bibitem{Ana2} C. Anantharaman-Delaroche and J. Renault, Amenable groupoids, Monographs of L'Enseignement Matheatique, 36 , Geneva, (2000) 196 pp.
	

\bibitem{BCLMS}
A.~T. Baraviera, L. M. Cioletti, A. O. Lopes, Joana Mohr and Rafael R.
  Souza,
On the general one-dimensional {$XY$} model: positive and zero
temperature, selection and non-selection,
Rev. Math. Phys., (2011)  23 (10):1063--1113

	\bibitem{BK}
	R. Bissacot and B. Kimura, Gibbs Measures on Multidimensional Sub-
	shifts, (2016) preprint  USP
	

\bibitem{CLM}
G. G. de Castro,  A. O. Lopes  and G. Mantovani,
Haar systems, KMS states on von Neumann algebras and $C^*$-algebras on   dynamically defined groupoids and Noncommutative Integration, to appear in "Modeling, Dynamics, Optimization and Bioeconomics IV" Editors: Alberto Pinto and David Zilberman, Springer Proceedings in Mathematics and Statistics, Springer Verlag


\bibitem{GL}
G. G.  Castro and A. O. Lopes,
KMS states, entropy, and a variational principle for pressure,
Real Anal. Exchange 34 (2009), no. 2, 333–-346

\bibitem{Cas1}
G. G. de Castro,
$C^*$-Algebras associadas a certas dinamicas e seus estados KMS, PhD thesis - Prog. Pos. Mat. - UFRGS (2009)


\bibitem{CL2} L. Cioletti and A. O. Lopes, Interactions, Specifications, DLR probabilities and the Ruelle Operator in the One-Dimensional Lattice, Disc. and Cont. Dyn. Sys., series A   (2017) Vol 37, Number 12, 6139 -- 6152 



\bibitem{CME} L. Cioletti and L. C. Melo, On the extensions of generalized transfer operators, arXiv (2020)
	
	

	
\bibitem{Con} A. Connes, Sur la Theorie non commutative de l'integration, Lect. Notes in Math. 725, (1979) Semminaire sur les Algebres d'Operateurs, Editor  P. de la Harpe,
	19--143 

	\bibitem{Exel1} R. Exel, A new look at the crossed-product of a $C^*$-algebra by an endomorphism,  Ergod. Theo. and Dynam. Sys., (2003) 23(6), 1733-1750.
	

	\bibitem{EL1} R. Exel and A. Lopes,
	$C^*$-algebras, approximately proper equivalence
	relations and thermodynamic formalism, Ergod. Theo. and Dynam. Sys., (2004) 24, 1051-1082 
	
	\bibitem{EL2} R. Exel and A. Lopes, $C^{*}$- Algebras and  Thermodynamic Formalism,
	Sao Paulo Journal of Mathematical Sciences, (2008) Vol. 2, 1, 285--307 
	

\bibitem{Fel1} J. Feldman and C. C. Moore,
Ergodic equivalence relations, cohomology, and von Neumann algebras. I,
Trans. Amer. Math. Soc.,  (1977)  234, 289-324

\bibitem{Fel2} J. Feldman and C. C. Moore,
Ergodic equivalence relations, cohomology, and von Neumann algebras. II,
Trans. Amer. Math. Soc.,  (1977) 234, 325-359



\bibitem{HR} N. T. A. Haydn and D. Ruelle, Equivalence of Gibbs and Equilibrium
states for homeomorphisms satisfying expansiveness and specification,
Comm. in Math. Phys., (1992) 148, 155-167


	
	
	\bibitem{K} D. Kastler, On Connes' Noncommutative Integration Theory, Comm. in Math. Physics, (1982) v 85, 99--120 

\bibitem{Kri1}
W. A. Krieger, On non-singular transformations of a measure space. I, Z. Wahrscheinlichkeitstheorie und Verw. Gebiete 11, (1969) 8, pp 3--97.

\bibitem{Kri2} W. A. Krieger,, On non-singular transformations of a measure space. II, Z. Wahrscheinlichkeitstheorie und Verw. Gebiete 11, (1969) 9, pp 8--119

\bibitem{KR}
A. Kumjian and J. Renault, KMS states on $C^*$-Algebras associated to
expansive maps, Proc. Amer. Math. Soc, (2006) Vol. 134, No. 7, 2067-2078 

\bibitem{LO} A. O.  Lopes  and E. Oliveira, Continuous groupoids on the symbolic space, quasi-invariant probabilities for Haar systems and the Haar-Ruelle operator, Bull of the Braz. Math. Soc., (2019) Vol 50. Issue 3, pp 663-–683 

\bibitem{LM} A. O. Lopes and G. Mantovani,
The KMS Condition for the homoclinic equivalence relation and Gibbs probabilities, Sao Paulo Journal of Mathematical Sciences, (2019) Volume 13, Issue 1, pp 248–-282 

\bibitem{LMMS} A. O. Lopes, J. K. Mengue, J. Mohr and  R. R. Souza,
Entropy and Variational Principle for  one-dimensional Lattice Systems with a general a-priori probability: positive and zero temperature,  Ergod. Theo. and Dynam. Sys., (2015) 35 (6), 1925–-1961 

\bibitem{Mi1}
B. Miller,  The existence of measures of a given cocycle. I. Atomless, ergodic $\sigma-$finite measures, Ergod. Theo. and Dynam. Sys.,  (2008)  28 (5), 1599-1613.

\bibitem{Mi2} B. Miller,
The existence of measures of a given cocycle. II. Probability measures, Ergod. Theo. and Dynam. Sys.,  (2008) 28, (5), 1615–-1633

\bibitem{Nad}
M. Nadkarni,
On the existence of an invariant measure,
Nat. Acad. Sci. Lett., 13 (1990), no. 4, 127–-128.



\bibitem{PP}
W. Parry and M.  Pollicott. Zeta functions and the periodic
orbit structure of hyperbolic dynamics, Ast\'erisque, (1990)
Vol {187-188}, pp 1-268 

\bibitem{Rena1} J. Renault, A Groupoid approach to $C^*$-algebras, Lecture Notes in Mathematics 793 (1980)
Springer-Verlag, 

\bibitem{Rena2} J. Renault, $C^*$-Algebras and Dynamical Systems, XXVII Coloq. Bras. Mat  (2009) IMPA - Rio de Janeiro 


\bibitem{Rena3} J. Renault,
The Radon-Nikodym problem for approximately proper equivalence relations, Ergod. Theo. and Dynam. Sys., (2005) (25), 1643--1672.

\bibitem{Rue}
D. Ruelle, Noncommutative algebras for hyperbolic diffeomorphisms,
Inventiones Mathematicae, (1988) (93):1–13, 1988.


\bibitem{OV} M. Viana and K. Oliveira,
Foundations of Ergodic Theory, (2016) Cambridge Press 

\bibitem{Sakai}
S. Sakai, $C^*$ algebras and $W^*$ algebras, (1971) Springer-Verlag


\bibitem{Schmi} K. Schmidt, Klaus Unique ergodicity for quasi-invariant measures. Math. Z.,  (1979) 167, no. 2, 169–-172.


\bibitem{Walters} P. Walters, An introduction to Ergodic Theory, (2000) Springer Verlag


\bibitem{wei}
A. Weinstein, Groupoids: unifying internal and external symmetry. A tour through some examples. Notices Amer. Math. Soc. 43 (1996), no. 7, 744–-752



\bibitem{Weiss}
B. Weiss,
Measurable dynamics. Conference in modern analysis and probability, New Haven, Conn., (1982), 395–-421

\end{thebibliography}
\end{document}